 \newtheorem{thm}{Theorem}[section]
 \newtheorem{cor}[thm]{Corollary}
 \newtheorem{lem}[thm]{Lemma}
 \theoremstyle{definition}
 \newtheorem{defn}[thm]{Definition}
 \theoremstyle{remark}
 \newtheorem{rem}[thm]{Remark}
 \numberwithin{equation}{section}
\def\Rn{{\mathbb{R}^n}}
\def\a {\alpha}
\def\i{\infty}
\def\l {\lambda}
\def\L1loc{L_{\Phi}^{\rm loc}(\Rn)}
\def\dual{\,^{^{\complement}}\!}
\newcommand{\es}{\mathop{\rm ess \; inf}\limits}
\begin{document}

\begin{center}
\LARGE Boundedness of Intrinsic Square Functions and their Commutators on Generalized Orlicz-Morrey Spaces
\end{center}

\

\centerline{\large Vagif S. Guliyev$^{a,b,}$\footnote{corresponding author: vagif@guliyev.com (V.S. Guliyev)
\\
{$^{a}$ Department of Mathematics, Ahi Evran University, Kirsehir, Turkey}
\\
{$^{b}$ Institute of Mathematics and Mechanics, Baku, Azerbaijan}
\noindent {\it AMS Mathematics Subject Classification:} $~~$ 42B20, 42B25, 42B35, 46E30
\\
\noindent {\it Key words:} generalized Orlicz-Morrey spaces; intrinsic square functions; commutator; BMO},  Fatih Deringoz$^{a,1}$}

\

\begin{abstract}
We study the boundedness of intrinsic square functions and their commutators on generalized Orlicz-Morrey spaces $M^{\Phi,\varphi}(\Rn)$. In all the cases the conditions for the boundedness are given either in terms of Zygmund-type integral inequalities on weights $\varphi(x,r)$ without assuming any  monotonicity property of  $\varphi(x,r)$ on $r$.
\end{abstract}

\

\section{Introduction}

The intrinsic square functions were first introduced by Wilson in \cite{Wilson1, Wilson2}. They are defined as follows. For $0<\alpha\leq 1$, let $C_{\alpha}$ be the family of functions $\phi : \Rn\rightarrow \mathbb{R}$ such that $\phi$'s support is contained in $\{x:|x|\leq 1\},\ \displaystyle \int\phi dx =0$, and for $x,\ x'\in \mathbb{R}^{n}$,
$$
|\phi(x)-\phi(x')|\leq|x-x'|^{\alpha}.
$$
For $(y,\ t)\in \mathbb{R}_{+}^{n+1}$ and $f\in L_{loc}^{1}(\mathbb{R}^{n})$ , set
$$
A_{\alpha}f(t,\ y)\equiv\sup_{\phi\in C_{\alpha}}|f*\phi_{t}(y)|,
$$
where $\displaystyle \phi_{t}(y)=t^{-n}\phi(\frac{y}{t})$ . Then we define the varying-aperture intrinsic square (intrinsic Lusin) function of $f$ by the formula
$$
G_{\alpha,\beta}(f)(x)=\left(\int\int_{\Gamma_{\beta}(x)}(A_{\alpha}f(t,y))^{2}\frac{dydt}{t^{n+1}}\right)^{\frac{1}{2}}
$$
where $\Gamma_{\beta}(x)=\{(y,\ t)\in \mathbb{R}_{+}^{n+1}:|x-y|<\beta t\}$. Denote $G_{\alpha,1}(f)=G_{\alpha}(f)$ .

This function is independent of any particular kernel, such as Poisson kernel. It dominates pointwise the classical square function(Lusin area integral) and its real-variable generalizations. Although the function $G_{\alpha,\beta}(f)$ is depend of kernels with uniform compact support, there is pointwise relation between $G_{\alpha,\beta}(f)$ with different $\beta$:
$$
G_{\alpha,\beta}(f)(x)\leq\beta^{\frac{3n}{2}+\alpha}G_{\alpha}(f)(x)\ .
$$
We can see details in \cite{Wilson1}.

The intrinsic Littlewood-Paley $\mathrm{g}$-function and the intrinsic $g_{\lambda}^{*}$ function are defined respectively by
$$
g_{\alpha}f(x)=\left(\int_{0}^{\infty}(A_{\alpha}f(y,t))^{2}\frac{dt}{t}\right)^{\frac{1}{2}},
$$
$$
g_{\lambda,\alpha}^{*}f(x)=\left(\int\int_{\mathbb{R}_{+}^{n+1}}\left(\frac{t}{t+|x-y|}\right)^{n\lambda}(A_{\alpha}f(y,t))^{2}\frac{dydt}{t^{n+1}}\right)^{\frac{1}{2}}.
$$
Let $b$ be a locally integrable function on $\mathbb{R}^{n}$. Setting
$$
A_{\alpha,b}f(t,y)\equiv\sup_{\phi\in C_{\alpha}}\left|\int_{\mathbb{R}^{n}}[b(x)-b(z)]\phi_{t}(y-z)f(z)dz\right|,
$$
the commutators are defined by
$$
[b,G_{\alpha}]f(x)=\left(\int\int_{\Gamma(x)}(A_{\alpha,b}f(t,y))^{2}\frac{dydt}{t^{n+1}}\right)^{\frac{1}{2}}
$$
$$
[b,g_{\alpha}]f(x)=\left(\int_{0}^{\infty}(A_{\alpha,b}f(t,y))^{2}\frac{dt}{t}\right)^{\frac{1}{2}}
$$
and
$$
[b,g_{\lambda,\alpha}^{*}]f(x)=\left(\int\int_{\mathbb{R}_{+}^{n+1}}\left(\frac{t}{t+|x-y|}\right)^{\lambda n}(A_{\alpha,b}f(t,y))^{2}\frac{dydt}{t^{n+1}}\right)^{\frac{1}{2}}
$$

Wilson \cite{Wilson1} proved that $G_{\alpha}$ is bounded on $L^{p}(\mathbb{R}^{n})$ for $1<p<\infty$ and $0<\alpha\leq 1$. After then, Huang and Liu \cite{HuLiu} studied the boundedness of intrinsic square functions on weighted Hardy spaces. Moreover, they characterized the weighted Hardy spaces by intrinsic square functions. In \cite{Wang2} and \cite{WangLiu}, Wang and Liu obtained some weak type estimates on weighted Hardy spaces. In \cite{Wang1}, Wang considered intrinsic functions and the commutators generated with BMO functions on weighted Morrey spaces.
In \cite{GulShuk2013}, Guliyev, Shukurov was proved the boundedness of intrinsic square functions and their commutators on generalized Morrey spaces. In \cite{GulIntrGWM}, Guliyev considered intrinsic functions and the commutators generated with BMO functions on generalized weighted Morrey spaces.
In \cite{LiNaYaZh}, Liang et al. studied the boundedness of these operators on Musielak-Orlicz Morrey spaces.

In this paper, we will consider $G_{\alpha},\ g_{\alpha},\ g_{\lambda,\alpha}^{*}$ and their commutators on generalized Orlicz-Morrey spaces. Note that, the Orlicz-Morrey spaces were introduced and studied by Nakai in \cite{Nakai0}. Also the boundedness of the operators of harmonic analysis on Orlicz-Morrey spaces see also, \cite{DerGulSam, GulDerArX, GulDerHasJIA, HasJFSA, Nakai1, Nakai2, SawSugTan}. Our definition of Orlicz-Morrey spaces (see \cite{DerGulSam}) is different from that of Nakai \cite{Nakai0} and Sawano et al. \cite{SawSugTan}.

By $A \lesssim B$ we mean that $A \le C B$ with some positive constant $C$ independent of appropriate quantities. If $A \lesssim B$ and $B \lesssim A$, we write $A\approx B$ and say that $A$ and $B$ are  equivalent. Everywhere in the sequel $B(x,r)$ stands for the ball in $\mathbb{R}^n$ of radius $r$ centered at $x$ and $|B(x,r)|$ be the Lebesgue measure of the ball $B(x,r)$ and $|B(x,r)|=v_n r^n$, where $v_n$ is the volume of the unit ball in $\Rn$.

\

\section{Preliminaries}

As is well known that Morrey \cite{Morrey} introduced the classical Morrey spaces to investigate the local behavior of solutions to second order elliptic partial differential
equations (PDE). We recall its definition as
\begin{equation*}
M^{p,\lambda}(\Rn) = \left\{ f \in L^p_{\rm loc}(\Rn) : \left\| f\right\|_{M^{p,\lambda}}: = \sup_{x \in \Rn, \; r>0 } r^{-\frac{\lambda}{p}} \|f\|_{L^{p}(B(x,r))} < \i \right\},
\end{equation*}
where $0 \le \lambda \le  n,$ $1\le p < \i$. $M^{p,\l}(\Rn)$ was an expansion of $L^p(\Rn)$ in the sense that $M^{p,0}(\Rn)=L^p(\Rn)$.

We recall the definition of Young functions.

\begin{defn}\label{def2} A function $\Phi : [0,+\infty) \rightarrow [0,\infty]$ is called a Young function if $\Phi$ is convex, left-continuous, $\lim\limits_{r\rightarrow +0} \Phi(r) = \Phi(0) = 0$ and $\lim\limits_{r\rightarrow +\infty} \Phi(r) = \infty$.
\end{defn}

From the convexity and $\Phi(0) = 0$ it follows that any Young function is increasing. If there exists $s \in (0,+\infty)$ such that $\Phi(s) = +\infty$, then $\Phi(r) = +\infty$ for $r \geq s$.

Let $\mathcal{Y}$ be the set of all Young functions $\Phi$ such that
\begin{equation}\label{2.1}
0<\Phi(r)<+\infty\qquad \text{for} \qquad 0<r<+\infty
\end{equation}
If $\Phi \in \mathcal{Y}$, then $\Phi$ is absolutely continuous on every closed interval in $[0,+\infty)$
and bijective from $[0,+\infty)$ to itself.

Orlicz spaces, introduced in \cite{Orlicz1, Orlicz2}, are generalizations of Lebesgue spaces $L^p$. 
They are useful tools in harmonic analysis and its applications. For example, the Hardy-Littlewood maximal operator is bounded on $L^p$ for $1 < p < \infty$, but not on $L^1$. Using Orlicz spaces, we can investigate the boundedness of the maximal operator near $p = 1$ more precisely (see  \cite{Kita1, Kita2} and \cite{Cianchi1}).

\begin{defn} (Orlicz Space). For a Young function $\Phi$, the set
$$L^{\Phi}(\Rn)=\left\{f\in L^1_{loc}(\Rn): \int_{\Rn}\Phi(k|f(x)|)dx<+\infty
 \text{ for some $k>0$  }\right\}$$
is called Orlicz space. If $\Phi(r)=r^{p},\, 1\le p<\i$, then $L^{\Phi}(\Rn)=L^{p}(\Rn)$. If $\Phi(r)=0,\,(0\le r\le 1)$ and $\Phi(r)=\i,\,(r> 1)$, then $L^{\Phi}(\Rn)=L^{\i}(\Rn)$. The  space $L^{\Phi}_{\rm loc}(\Rn)$ endowed with the natural topology  is defined as the set of all functions $f$ such that  $f\chi_{_B}\in L^{\Phi}(\Rn)$ for all balls $B \subset \Rn$. We refer to the books \cite{KokKrbec, KrasnRut, RaoRen} for the theory of Orlicz Spaces.
\end{defn}

$L^{\Phi}(\Rn)$ is a Banach space with respect to the norm
$$\|f\|_{L^{\Phi}}=\inf\left\{\lambda>0:\int_{\Rn}\Phi\Big(\frac{|f(x)|}{\lambda}\Big)dx\leq 1\right\}.$$
We note that
$$\int_{\Rn}\Phi\Big(\frac{|f(x)|}{\|f\|_{L^{\Phi}}}\Big)dx\leq 1.$$

For a Young function $\Phi$ and  $0 \leq s \leq +\infty$, let
$$\Phi^{-1}(s)=\inf\{r\geq 0: \Phi(r)>s\}\qquad (\inf\emptyset=+\infty).$$
If $\Phi \in \mathcal{Y}$, then $\Phi^{-1}$ is the usual inverse function of $\Phi$. We note that
\begin{equation}\label{younginverse}
\Phi(\Phi^{-1}(r))\leq r \leq \Phi^{-1}(\Phi(r)) \quad \text{ for } 0\leq r<+\infty.
\end{equation}
A Young function $\Phi$ is said to satisfy the $\Delta_2$-condition, denoted by  $\Phi \in \Delta_2$, if
$$
\Phi(2r)\le k\Phi(r) \text{    for } r>0
$$
for some $k>1$. If $\Phi \in \Delta_2$, then $\Phi \in \mathcal{Y}$. A Young function $\Phi$ is said to satisfy the $\nabla_2$-condition, denoted also by  $\Phi \in \nabla_2$, if
$$\Phi(r)\leq \frac{1}{2k}\Phi(kr),\qquad r\geq 0,$$
for some $k>1$. The function $\Phi(r) = r$ satisfies the $\Delta_2$-condition but does not satisfy the $\nabla_2$-condition.
If $1 < p < \infty$, then $\Phi(r) = r^p$ satisfies both the conditions. The function $\Phi(r) = e^r - r - 1$ satisfies the
$\nabla_2$-condition but does not satisfy the $\Delta_2$-condition.

\begin{defn}
A Young function $\Phi$ is said to be of upper type p (resp. lower type p) for some $p\in[0,\i)$, if there exists a positive constant $C$ such that, for all $t\in[1,\i)$(resp. $t\in[0,1]$) and $s\in[0,\i)$,
$$
\Phi(st)\le Ct^p\Phi(s).
$$
\end{defn}

\begin{rem}\label{remlowup}
We know that if $\Phi$ is lower type $p_0$ and upper type $p_1$ with $1<p_0\le p_1<\i$, then $\Phi\in \Delta_2\cap\nabla_2$. Conversely if $\Phi\in \Delta_2\cap\nabla_2$, then  $\Phi$ is lower type $p_0$ and upper type $p_1$ with $1<p_0\le p_1<\i$ (see for example \cite{KokKrbec}).
\end{rem}

\begin{lem}\cite{Ky1}\label{Kylowupp}
Let $\Phi$ be a Young function which is lower type $p_0$ and upper type $p_1$ with $0<p_0\le p_1<\i$. Let $\widetilde{C}$ be a positive constant. Then there exists a positive constant $C$ such that
for any ball $B$ of $\Rn$ and $\mu\in(0,\i)$
$$\int_{B}\Phi\left(\frac{|f(x)|}{\mu}\right)dx\le \widetilde{C}$$
implies that $\|f\|_{L^\Phi(B)}\le C\mu$.
\end{lem}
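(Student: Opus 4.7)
The plan is to invoke the definition of the Luxemburg norm directly and adjust the scaling parameter $\mu$ using the lower type $p_0$ hypothesis on $\Phi$. Recall that $\|f\|_{L^{\Phi}(B)}=\inf\{\lambda>0:\int_B\Phi(|f(x)|/\lambda)\,dx\le 1\}$, so it suffices to exhibit a value $\lambda=C\mu$ for which the modular integral is at most $1$.

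First I would dispose of the trivial case: if $\widetilde{C}\le 1$, one may simply take $C=1$, since the hypothesis with $\lambda=\mu$ immediately yields $\int_B\Phi(|f|/\mu)\,dx\le 1$ and hence $\|f\|_{L^{\Phi}(B)}\le\mu$.

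In the main case $\widetilde{C}>1$, I would exploit the lower type property. By definition there exists $c_0>0$ such that $\Phi(st)\le c_0\, t^{p_0}\Phi(s)$ for every $t\in[0,1]$ and $s\ge 0$. Applied with $s=|f(x)|/\mu$ and $t=1/K$ for some $K\ge 1$ to be chosen, this yields
$$\Phi\!\left(\frac{|f(x)|}{K\mu}\right)\le \frac{c_0}{K^{p_0}}\,\Phi\!\left(\frac{|f(x)|}{\mu}\right).$$
Integrating over $B$ and combining with the standing hypothesis gives
$$\int_{B}\Phi\!\left(\frac{|f(x)|}{K\mu}\right)dx\le \frac{c_0\,\widetilde{C}}{K^{p_0}}.$$
I would then choose $K=\max\bigl\{1,(c_0\widetilde{C})^{1/p_0}\bigr\}$, which forces the right-hand side to be at most $1$ and hence, by the definition of the Luxemburg norm, $\|f\|_{L^{\Phi}(B)}\le K\mu$. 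Setting $C:=K$ completes the argument.

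The ``main obstacle,'' such as it is, amounts to recognizing that the lower type $p_0$ condition is precisely what governs how $\Phi$ reacts to \emph{shrinking} its argument, which is what is needed to convert an a priori modular bound $\widetilde{C}$ into the normalized bound $1$ required by the definition of $\|\cdot\|_{L^{\Phi}(B)}$. The upper type $p_1$ hypothesis plays no role here and is included only for consistency with the framework used elsewhere in the paper. Once the pointwise dilation estimate above is in place, the proof reduces to a one-parameter optimization in $K$, and the resulting constant $C$ depends only on $\widetilde{C}$, $c_0$, and $p_0$ as required.
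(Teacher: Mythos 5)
Your proof is correct. Note that the paper itself does not prove this lemma --- it is simply cited from Ky's work \cite{Ky1} --- so there is no in-text argument to compare against; your self-contained argument is the standard one. The key step is exactly right: the lower type $p_0$ inequality $\Phi(s/K)\le c_0 K^{-p_0}\Phi(s)$ (valid for $K\ge 1$) converts the modular bound $\widetilde{C}$ into a bound $\le 1$ after rescaling by $K=\max\{1,(c_0\widetilde{C})^{1/p_0}\}$, and then the Luxemburg norm definition gives $\|f\|_{L^\Phi(B)}\le K\mu$. The resulting constant depends only on $c_0$, $p_0$, and $\widetilde{C}$, as required, and you are also correct that the upper type hypothesis is not used. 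The preliminary case split for $\widetilde{C}\le 1$ is harmless but unnecessary, since the same choice of $K$ already handles it.
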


For a Young function $\Phi$, the complementary function $\widetilde{\Phi}(r)$ is defined by
\begin{equation}\label{2.2}
\widetilde{\Phi}(r)=\left\{
\begin{array}{ccc}
\sup\{rs-\Phi(s): s\in [0,\infty)\}
& , & r\in [0,\infty) \\
+\infty&,& r=+\infty.
\end{array}
\right.
\end{equation}
The complementary function  $\widetilde{\Phi}$ is also a Young function and $\widetilde{\widetilde{\Phi}}=\Phi$. If $\Phi(r)=r$, then $\widetilde{\Phi}(r)=0$ for $0\leq r \leq 1$ and $\widetilde{\Phi}(r)=+\infty$
  for $r>1$. If $1 < p < \infty$, $1/p+1/p^\prime= 1$ and $\Phi(r) =
r^p/p$, then $\widetilde{\Phi}(r) = r^{p^\prime}/p^\prime$. If $\Phi(r) = e^r-r-1$, then $\widetilde{\Phi}(r) = (1+r) \log(1+r)-r$. Note that $\Phi \in \nabla_2$ if and only if $\widetilde{\Phi} \in \Delta_2$. It is known that
\begin{equation}\label{2.3}
r\leq \Phi^{-1}(r)\widetilde{\Phi}^{-1}(r)\leq 2r \qquad \text{for } r\geq 0.
\end{equation}

Note that Young functions satisfy the  properties
\begin{equation} \label{sam1}
\Phi(\alpha t)\leq \alpha \Phi(t)
\end{equation}
for  all $0\le\a\le1$ and  $0 \le t < \i$, and
\begin{equation} \label{sam2}
\Phi(\beta t)\geq \beta \Phi(t)
\end{equation}
for  all $\beta>1$ and  $0 \le t < \i$.

The following analogue of the H\"older inequality is known, see  \cite{Weiss}.
\begin{thm} \cite{Weiss} \label{HolderOr}
For a Young function $\Phi$ and its complementary function  $\widetilde{\Phi}$,
the following inequality is valid
$$\|fg\|_{L^{1}(\Rn)} \leq 2 \|f\|_{L^{\Phi}} \|g\|_{L^{\widetilde{\Phi}}}.$$
\end{thm}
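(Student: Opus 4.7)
The plan is to derive this directly from Young's inequality for complementary Young functions, namely
$$ st \;\leq\; \Phi(s) + \widetilde{\Phi}(t) \qquad \text{for all } s,t\ge 0, $$
which is immediate from the definition \eqref{2.2}: indeed, $\widetilde{\Phi}(t) = \sup_{s\ge 0}\bigl(st - \Phi(s)\bigr) \ge st - \Phi(s)$.

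First I would handle the trivial cases: if $\|f\|_{L^{\Phi}}=0$ or $\|g\|_{L^{\widetilde{\Phi}}}=0$ then $fg=0$ a.e.\ and the inequality holds; if either norm is infinite there is nothing to prove. So assume $0 < \|f\|_{L^{\Phi}},\,\|g\|_{L^{\widetilde{\Phi}}} < \infty$. By homogeneity it suffices to prove the statement after normalization, i.e.\ to show that if $\|f\|_{L^{\Phi}}=\|g\|_{L^{\widetilde{\Phi}}}=1$ then $\int_{\Rn}|fg|\,dx \le 2$.

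Under this normalization I would apply Young's inequality pointwise with $s=|f(x)|$ and $t=|g(x)|$, giving
$$ |f(x)g(x)| \;\leq\; \Phi(|f(x)|) + \widetilde{\Phi}(|g(x)|). $$
Integrating over $\Rn$ and using the standard property of the Luxemburg norm recalled in the excerpt, namely $\int_{\Rn}\Phi(|f(x)|/\|f\|_{L^{\Phi}})\,dx\le 1$ (and similarly for $g$ with $\widetilde{\Phi}$), we obtain
$$ \int_{\Rn}|f(x)g(x)|\,dx \;\le\; \int_{\Rn}\Phi(|f(x)|)\,dx + \int_{\Rn}\widetilde{\Phi}(|g(x)|)\,dx \;\le\; 1+1 \;=\; 2. $$
Undoing the normalization by replacing $f$ with $f/\|f\|_{L^{\Phi}}$ and $g$ with $g/\|g\|_{L^{\widetilde{\Phi}}}$ yields the claimed inequality with constant $2$.

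There is really no serious obstacle here; the only subtlety is making sure the modular inequality $\int\Phi(|f|/\|f\|_{L^\Phi})\,dx\le 1$ is available in full generality (including the case when $\Phi$ takes the value $+\infty$, where one interprets the integrand correctly on the set where $|f|/\|f\|_{L^\Phi}$ exceeds the finiteness threshold of $\Phi$). This is guaranteed by the left-continuity clause in Definition~\ref{def2} together with a standard Fatou/monotone convergence argument applied to $\|f\|_{L^\Phi}+\varepsilon$ as $\varepsilon\to 0^+$.
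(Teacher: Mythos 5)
Your proof is correct. The paper does not actually supply a proof of this theorem---it simply cites Weiss \cite{Weiss}---so there is nothing in the source to compare against. The argument you give (Young's inequality $st\le\Phi(s)+\widetilde{\Phi}(t)$ from the definition of the complementary function, normalization by the Luxemburg norms, pointwise application, integration, and the modular bound $\int_{\Rn}\Phi(|f|/\|f\|_{L^{\Phi}})\,dx\le 1$ which the paper records just after the norm definition) is the standard proof and is sound. Your closing caveat about the case where $\Phi$ takes the value $+\infty$ is appropriately handled by the left-continuity in Definition~\ref{def2}, exactly as you say.
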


The following lemma is valid.
\begin{lem}\label{lem4.0}  \cite{BenSharp, LiuWang, Nakai0}
Let $\Phi$ be a Young function and $B$ a set in $\mathbb{R}^n$ with finite Lebesgue measure. Then
$$
\|\chi_{_B}\|_{WL_{\Phi}(\Rn)} = \|\chi_{_B}\|_{L^{\Phi}(\Rn)} = \frac{1}{\Phi^{-1}\left(|B|^{-1}\right)}.
$$
\end{lem}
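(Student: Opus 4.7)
The plan is to reduce both norms of $\chi_{_B}$ to the same one-variable inequality in $\lambda$, and then invert it using the definition of $\Phi^{-1}$ together with the left-continuity of $\Phi$ built into Definition \ref{def2}. For the Luxemburg norm the key observation is that $\chi_{_B}$ takes only the values $0$ and $1$, so
$$
\int_{\Rn}\Phi\Big(\frac{\chi_{_B}(x)}{\lambda}\Big)\,dx = \Phi(1/\lambda)\,|B|,
$$
and hence the defining condition $\int \Phi(\chi_{_B}/\lambda)\,dx\le 1$ collapses to the single scalar inequality $\Phi(1/\lambda)\le 1/|B|$. Since $\Phi$ is non-decreasing and left-continuous, the formula $\Phi^{-1}(s)=\inf\{r\ge 0:\Phi(r)>s\}$ coincides with $\sup\{r\ge 0:\Phi(r)\le s\}$, so the inequality is equivalent to $1/\lambda\le \Phi^{-1}(1/|B|)$. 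Taking the infimum over admissible $\lambda$ gives $\|\chi_{_B}\|_{L^{\Phi}}=1/\Phi^{-1}(1/|B|)$; at the extremal value $\lambda=1/\Phi^{-1}(1/|B|)$ the inequality still holds by \eqref{younginverse}.

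For the weak norm I would compute the distribution function explicitly: $d_{\chi_{_B}}(t)=|B|$ for $0<t<1$ and $d_{\chi_{_B}}(t)=0$ for $t\ge 1$. The weak Orlicz quantity thus becomes
$$
\sup_{t>0}\Phi(t/\lambda)\,d_{\chi_{_B}}(t) = |B|\sup_{0<t<1}\Phi(t/\lambda) = \Phi(1/\lambda)\,|B|,
$$
where the last equality uses the left-continuity of $\Phi$ at $1/\lambda$. This matches the strong-norm expression exactly, so the same inversion yields $\|\chi_{_B}\|_{WL_{\Phi}}=1/\Phi^{-1}(1/|B|)$. Alternatively, the elementary Chebyshev-type bound $\Phi(t/\lambda)\,d_f(t)\le \int \Phi(|f|/\lambda)\,dx$ gives $\|f\|_{WL_{\Phi}}\le \|f\|_{L^{\Phi}}$ for every $f$, which supplies one direction for free and reduces the task to proving only the matching lower bound.

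The only delicate point is the correct behaviour of $\Phi^{-1}$ when $\Phi$ has flat segments or jumps to $+\i$: in those cases $\Phi$ and $\Phi^{-1}$ are not strict inverses, but left-continuity of $\Phi$ is precisely what guarantees both that $\sup_{0<t<1}\Phi(t/\lambda)=\Phi(1/\lambda)$ and that the extremal $\lambda=1/\Phi^{-1}(1/|B|)$ is admissible in the Luxemburg infimum. No further hypothesis on $\Phi$ (such as $\Delta_2$, $\nabla_2$, or lower/upper type) is needed, which is what permits the lemma to be stated for an arbitrary Young function.
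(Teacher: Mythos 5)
The paper does not prove this lemma; it simply cites it to \cite{BenSharp, LiuWang, Nakai0}, so there is no in-paper argument to compare against. Your direct computation is a correct, self-contained proof. The key reductions are sound: since $\chi_{_B}$ takes only the values $0$ and $1$ and $\Phi(0)=0$, both $\int_{\Rn}\Phi(\chi_{_B}/\lambda)\,dx$ and $\sup_{t>0}\Phi(t/\lambda)\,d_{\chi_{_B}}(t)$ collapse to $\Phi(1/\lambda)\,|B|$, the latter because the distribution function is $|B|$ on $(0,1)$ and $0$ thereafter and left-continuity gives $\sup_{0<t<1}\Phi(t/\lambda)=\Phi(1/\lambda)$. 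Your identification $\Phi^{-1}(s)=\sup\{r\ge 0:\Phi(r)\le s\}$ is exactly the point where left-continuity of the Young function is used, and it correctly converts $\Phi(1/\lambda)\le |B|^{-1}$ into $\lambda\ge 1/\Phi^{-1}(|B|^{-1})$; the admissibility of the extremal $\lambda$ via \eqref{younginverse} closes the infimum. The Chebyshev remark $\|f\|_{WL_{\Phi}}\le\|f\|_{L^{\Phi}}$ is a harmless redundancy here since you compute both sides exactly. One small caveat worth making explicit: the formula as stated tacitly assumes $0<|B|<\infty$ (if $|B|=0$ both norms vanish and $\Phi^{-1}(\infty)$ must be read as $+\infty$ under the conventions of Remark~\ref{rem2.3.}), but this is the same implicit assumption the paper makes.
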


In the next sections where we prove our main estimates, we use the following lemma, which follows  from Theorem \ref{HolderOr}, Lemma \ref{lem4.0} and \eqref{2.3}.
\begin{lem}\label{lemHold}
For a Young function $\Phi$ and $B=B(x,r)$, the following inequality is valid
$$\|f\|_{L^{1}(B)} \leq 2 |B| \Phi^{-1}\left(|B|^{-1}\right) \|f\|_{L^{\Phi}(B)} .$$
\end{lem}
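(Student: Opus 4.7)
The plan is to combine the generalized H\"older inequality for Orlicz spaces (Theorem \ref{HolderOr}) with the explicit formula for the Orlicz norm of a characteristic function (Lemma \ref{lem4.0}) and the inverse-product inequality \eqref{2.3} applied to $\widetilde{\Phi}$. This is essentially a short chain of substitutions rather than a proof requiring a genuine idea.

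First, I would rewrite $\|f\|_{L^1(B)}$ as a global integral via the characteristic function, namely $\|f\|_{L^1(B)} = \|f\chi_{_B}\|_{L^1(\Rn)}$. This puts the quantity in a form where Theorem \ref{HolderOr} can be applied to the pair $(f\chi_{_B},\chi_{_B})$ with the conjugate Young functions $(\Phi,\widetilde{\Phi})$, yielding
$$\|f\|_{L^1(B)} \le 2\,\|f\chi_{_B}\|_{L^\Phi(\Rn)}\,\|\chi_{_B}\|_{L^{\widetilde{\Phi}}(\Rn)} = 2\,\|f\|_{L^\Phi(B)}\,\|\chi_{_B}\|_{L^{\widetilde{\Phi}}(\Rn)}.$$

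Next, I would evaluate the second factor using Lemma \ref{lem4.0}, applied to the complementary Young function $\widetilde{\Phi}$: this gives $\|\chi_{_B}\|_{L^{\widetilde{\Phi}}(\Rn)} = 1/\widetilde{\Phi}^{-1}(|B|^{-1})$. To convert this into an expression involving only $\Phi^{-1}$, I would invoke \eqref{2.3} at the point $r = |B|^{-1}$, which says $|B|^{-1} \le \Phi^{-1}(|B|^{-1})\,\widetilde{\Phi}^{-1}(|B|^{-1})$, equivalently $1/\widetilde{\Phi}^{-1}(|B|^{-1}) \le |B|\,\Phi^{-1}(|B|^{-1})$.

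Substituting this bound into the previous display yields
$$\|f\|_{L^1(B)} \le 2\,|B|\,\Phi^{-1}(|B|^{-1})\,\|f\|_{L^\Phi(B)},$$
which is exactly the stated inequality. There is no significant obstacle here; the only mild care needed is to apply Lemma \ref{lem4.0} to $\widetilde{\Phi}$ rather than $\Phi$ (so that \eqref{2.3} can eliminate $\widetilde{\Phi}^{-1}$ in favor of $\Phi^{-1}$), and to recognize that $\|f\chi_{_B}\|_{L^\Phi(\Rn)}$ coincides with $\|f\|_{L^\Phi(B)}$ by the definition of the localized Orlicz norm.
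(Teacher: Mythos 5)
Your proof is correct and follows exactly the route the paper indicates: the paper states the lemma ``follows from Theorem~\ref{HolderOr}, Lemma~\ref{lem4.0} and \eqref{2.3}'' without further elaboration, and your chain---H\"older to pull out $\|\chi_{_B}\|_{L^{\widetilde\Phi}}$, Lemma~\ref{lem4.0} applied to $\widetilde\Phi$, then \eqref{2.3} at $r=|B|^{-1}$ to trade $1/\widetilde\Phi^{-1}(|B|^{-1})$ for $|B|\,\Phi^{-1}(|B|^{-1})$---is precisely the intended filling-in of that outline. No issues.
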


\begin{defn} (generalized Orlicz-Morrey Space)
Let $\varphi(x,r)$ be a positive measurable function on $\Rn \times (0,\infty)$ and $\Phi$ any Young function.
We denote by $M^{\Phi,\varphi}(\Rn)$ the generalized Orlicz-Morrey space, the space of all
functions $f\in L^{\Phi}_{\rm loc}(\Rn)$ with finite quasinorm
$$
\|f\|_{M^{\Phi,\varphi}} = \sup\limits_{x\in\Rn, r>0}
\varphi(x,r)^{-1} \Phi^{-1}(r^{-n}) \|f\|_{L^{\Phi}(B(x,r))}.
$$
\end{defn}
According to this definition, we recover the generalized Morrey space $M^{p,\varphi}$ under the choice $\Phi(r)=r^{p},\,1\le p<\i$. If $\Phi(r)=r^{p},\,1\le p<\i$ and $\varphi(x,r)=r^{\frac{\lambda-n}{p}},\,0\le \lambda \le n$, then $M^{\Phi,\varphi}(\Rn)$ coincides with the Morrey space $M^{p,\lambda}(\Rn)$ and if $\varphi(x,r)=\Phi^{-1}(r^{-n})$, then $M^{\Phi,\varphi}(\Rn)$ coincides with the Orlicz space $L^{\Phi}(\Rn)$.

\

\section{Intrinsic square functions in the spaces $M^{\Phi,\varphi}(\Rn)$}

The known boundedness statement for $G_{\alpha}$ on Orlicz spaces runs as follows.
\begin{thm}\cite{LiNaYaZh}\label{LiNaYaZhprop2.11-1}
Let $\a\in(0,1]$, $\Phi$ be a Young function which is lower type $p_0$ and upper type $p_1$ with $1<p_0\le p_1<\i$. Then $G_{\alpha}$ is bounded from $L^{\Phi}(\mathbb{R}^{n})$ to itself.
\end{thm}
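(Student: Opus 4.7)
The plan is to deduce the Orlicz boundedness from Wilson's classical result (see \cite{Wilson1}) that $G_\a$ is bounded on $L^p(\Rn)$ for every $1<p<\i$, via a Marcinkiewicz-type interpolation argument exploiting the type structure of $\Phi$. By Remark~\ref{remlowup} we already know $\Phi\in\Delta_2\cap\nabla_2$; more importantly, since $\Phi$ is of lower type $p_0$ and upper type $p_1$ with the \emph{strict} bounds $1<p_0\le p_1<\i$, I can fix auxiliary exponents $\tilde p_0\in(1,p_0)$ and $\tilde p_1\in(p_1,\i)$ on both of which Wilson's theorem applies.

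Given $f\in L^\Phi(\Rn)$, for each level $\l>0$ I decompose $f=f_1^\l+f_2^\l$ with $f_1^\l=f\chi_{\{|f|>\l\}}$ and $f_2^\l=f\chi_{\{|f|\le\l\}}$. Sublinearity of $G_\a$ (inherited from sublinearity of $A_\a$ and the triangle inequality in the tent-space $L^2$-norm) together with Chebyshev's inequality and the $L^{\tilde p_0}$- and $L^{\tilde p_1}$-bounds for $G_\a$ yields
\[
|\{x:G_\a f(x)>\l\}|\lesssim \l^{-\tilde p_0}\!\int_{\{|f|>\l\}}\!|f(x)|^{\tilde p_0}\,dx+\l^{-\tilde p_1}\!\int_{\{|f|\le\l\}}\!|f(x)|^{\tilde p_1}\,dx.
\]
Representing $\int_\Rn \Phi(|G_\a f(x)|)\,dx=\int_0^\i \Phi'(\l)\,|\{G_\a f>\l\}|\,d\l$, substituting the estimate above, and applying Fubini reduce matters to the two pointwise inequalities
\[
\int_0^{t}\Phi'(\l)\l^{-\tilde p_0}\,d\l\lesssim t^{-\tilde p_0}\Phi(t),\qquad \int_{t}^\i\Phi'(\l)\l^{-\tilde p_1}\,d\l\lesssim t^{-\tilde p_1}\Phi(t).
\]
Integration by parts reduces these to $\int_0^t\Phi(\l)\l^{-\tilde p_0-1}\,d\l\lesssim t^{-\tilde p_0}\Phi(t)$ and $\int_t^\i \Phi(\l)\l^{-\tilde p_1-1}\,d\l\lesssim t^{-\tilde p_1}\Phi(t)$, which follow directly from the lower type bound $\Phi(\l)\le C(\l/t)^{p_0}\Phi(t)$ for $\l\le t$ and the upper type bound $\Phi(\l)\le C(\l/t)^{p_1}\Phi(t)$ for $\l\ge t$, since $p_0-\tilde p_0>0$ and $\tilde p_1-p_1>0$ make the resulting power integrals converge at $0$ and at $\i$ respectively.

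Combining the two pieces yields $\int_\Rn \Phi(|G_\a f(x)|)\,dx\lesssim \int_\Rn \Phi(|f(x)|)\,dx$, from which a standard normalization (replace $f$ by $f/\|f\|_{L^\Phi}$ and use positive homogeneity of $G_\a$) together with Lemma~\ref{Kylowupp} delivers $\|G_\a f\|_{L^\Phi}\lesssim \|f\|_{L^\Phi}$. The main obstacle I foresee is the bookkeeping involved in selecting $\tilde p_0$ and $\tilde p_1$ so that the interpolation step and the $\l$-integrability at $0$ and $\i$ succeed simultaneously; the strict inequalities $1<p_0$ and $p_1<\i$ in the hypotheses are precisely what allow this choice. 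Note that the argument uses nothing about $G_\a$ beyond its $L^p$-boundedness for $1<p<\i$ and its sublinearity, so the same reasoning yields the corresponding statements for $g_\a$ and $g^*_{\l,\a}$ (with appropriate restrictions on $\l$) once their classical $L^p$ bounds are in hand.
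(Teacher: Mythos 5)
The paper itself gives no proof of this theorem --- it is quoted from \cite{LiNaYaZh}, where it is obtained in the broader Musielak--Orlicz setting --- so there is no in-paper argument to compare against, and your blind proof is necessarily an alternative route. It is a correct one: a Marcinkiewicz-type interpolation for sublinear operators on Orlicz spaces. You exploit the strict inequalities $1<p_0\le p_1<\infty$ to choose $\tilde p_0\in(1,p_0)$ and $\tilde p_1\in(p_1,\infty)$ on which Wilson's $L^p$ theorem holds, split $f$ at each level $\lambda$, and bound the distribution function of $G_\alpha f$ via Chebyshev; the layer-cake representation of $\int\Phi(|G_\alpha f|)\,dx$ plus Fubini then reduces everything to the two elementary estimates $\int_0^t\Phi(\lambda)\lambda^{-\tilde p_0-1}\,d\lambda\lesssim t^{-\tilde p_0}\Phi(t)$ and $\int_t^\infty\Phi(\lambda)\lambda^{-\tilde p_1-1}\,d\lambda\lesssim t^{-\tilde p_1}\Phi(t)$, which your choice of $\tilde p_0,\tilde p_1$ validates through the lower and upper type bounds. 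Three places that deserve an explicit line in a polished write-up: the sublinearity of $G_\alpha$, which follows from sublinearity of $A_\alpha$ together with Minkowski's inequality for the $L^2$ norm over the cone $\Gamma(x)$, and which you assert but do not verify; the absolute continuity of $\Phi$ needed for the layer-cake identity, which is automatic here since upper type $p_1<\infty$ forces $\Phi$ to be finite-valued and hence, by convexity, locally Lipschitz; and the vanishing of the integration-by-parts boundary terms at $0$ and at $\infty$, which again uses exactly $\tilde p_0<p_0$ and $\tilde p_1>p_1$. The payoff of your argument over the black-box citation is transparency: it shows that the Orlicz boundedness uses nothing about $G_\alpha$ beyond sublinearity and two $L^p$ estimates straddling the type interval, so, as you observe, the same argument transfers at once to $g_\alpha$ and to $g^*_{\lambda,\alpha}$ once their $L^p$ bounds are available.
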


We will  use the following statement on the boundedness
of the weighted Hardy operator
$$
H^{\ast}_{w} g(t):=\int_t^{\infty} g(s) w(s) ds,~ \ \  0<t<\infty,
$$
where $w$ is a weight.

The following theorem was proved in \cite{GulJMS2013}. 

\begin{thm}\label{thm3.2.}
Let $v_1$, $v_2$ and $w$ be weights on  $(0,\infty)$ and $v_1(t)$ be bounded outside a neighborhood of the
origin. The inequality
\begin{equation} \label{vav01}
\sup _{t>0} v_2(t) H^{\ast}_{w} g(t) \leq C \sup _{t>0} v_1(t) g(t)
\end{equation}
holds for some $C>0$ for all non-negative and non-decreasing $g$ on $(0,\i)$ if and
only if
\begin{equation} \label{vav02}
B:= \sup _{t>0} v_2(t)\int_t^{\infty} \frac{w(s) ds}{\sup _{s<\tau<\infty} v_1(\tau)}<\infty.
\end{equation}
Moreover, the value  $C=B$ is the best constant for  \eqref{vav01}.
\end{thm}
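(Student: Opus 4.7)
The plan is to establish the equivalence of \eqref{vav01} and \eqref{vav02} in the two standard directions and to observe that the constants match, giving $C=B$. Throughout, write $V(s):=\sup_{s<\tau<\infty}v_1(\tau)$; the hypothesis that $v_1$ is bounded outside a neighborhood of the origin guarantees $V(s)<\infty$ for every $s>0$, and by construction $V$ is non-increasing.

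For sufficiency, assume $B<\infty$, fix a non-negative non-decreasing $g$, and set $A:=\sup_{\tau>0}v_1(\tau)g(\tau)$. The key pointwise bound
$$g(s)\le \frac{A}{V(s)}\qquad(s>0)$$
follows because for every $\tau>s$ monotonicity of $g$ gives $v_1(\tau)g(s)\le v_1(\tau)g(\tau)\le A$, hence $g(s)\le A/v_1(\tau)$, and one then takes the infimum over $\tau>s$. Inserting this in the definition of $H^{\ast}_{w}g(t)$ and invoking \eqref{vav02},
$$v_2(t)\,H^{\ast}_{w}g(t)=v_2(t)\int_t^{\infty}g(s)\,w(s)\,ds\le A\,v_2(t)\int_t^{\infty}\frac{w(s)\,ds}{V(s)}\le AB,$$
and taking $\sup_{t>0}$ yields \eqref{vav01} with constant no larger than $B$.

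For necessity, plug into \eqref{vav01} the extremal candidate $g(s):=\min\bigl(1/V^{\ast}(s),\,N\bigr)$, where $V^{\ast}(s):=\sup_{s\le\tau<\infty}v_1(\tau)$ and $N$ is a large truncation parameter. Since $V^{\ast}$ is non-increasing and finite for $s>0$, this $g$ is non-negative and non-decreasing, and because the point $\tau=s$ is included in the supremum defining $V^{\ast}(s)$ we have $v_1(s)\le V^{\ast}(s)$, so $v_1(\tau)g(\tau)\le 1$ for every $\tau$. Substituting and then letting $N\to\infty$ via monotone convergence,
$$v_2(t)\int_t^{\infty}\frac{w(s)\,ds}{V^{\ast}(s)}\le C\qquad(t>0).$$
A short argument shows that the set $\{s:V^{\ast}(s)>V(s)\}$ is at most countable: the open intervals $(V(s),V^{\ast}(s))$ are pairwise disjoint (for $s_1<s_2$ in this set, $V^{\ast}(s_2)\le V(s_1)$), so one may inject it into $\mathbb{Q}$ by picking a rational from each. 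Consequently $V^{\ast}=V$ Lebesgue almost everywhere and the integrals against $1/V^{\ast}$ and $1/V$ agree, which yields $B\le C$. Combined with the sufficiency step this forces $C=B$.

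The main obstacle I anticipate is precisely the mismatch between $V$ (strict inequality) and $V^{\ast}$ (weak inequality) when producing the extremal test function: choosing $g=1/V$ directly fails the pointwise constraint $v_1g\le 1$ at upward jumps of $v_1$. The route above sidesteps this through the measure-theoretic negligibility of the jump set, and the assumed boundedness of $v_1$ outside a neighborhood of the origin is what keeps $V^{\ast}$ finite on $(0,\infty)$, making the truncation argument clean.
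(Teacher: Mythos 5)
The paper does not actually prove Theorem~\ref{thm3.2.}; it cites it from \cite{GulJMS2013} and uses it as a black box. So there is no in-paper proof to compare against, and your argument has to stand on its own merits.

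Your proof is correct. The sufficiency direction is exactly right: writing $V(s)=\sup_{s<\tau<\infty}v_1(\tau)$, the pointwise bound $g(s)\le A/V(s)$ follows cleanly from the monotonicity of $g$ (the inequality holds vacuously when $V(s)=0$, since then $A/V(s)=\infty$), and inserting it into $H^{\ast}_w g$ and using \eqref{vav02} gives $\sup_t v_2(t)H^{\ast}_w g(t)\le B\cdot A$, i.e.\ the constant $B$ works. For necessity, your choice of test function $g=\min(1/V^{\ast},N)$ with $V^{\ast}(s)=\sup_{s\le\tau<\infty}v_1(\tau)$ is the right move, and you correctly identify the one genuine pitfall: $g=1/V$ need not satisfy $v_1g\le 1$ at upward jumps of $v_1$, whereas $v_1\le V^{\ast}$ always. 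Your truncation plus monotone convergence yields $\sup_t v_2(t)\int_t^\infty w/V^{\ast}\le C$, and the observation that $\{s:V^{\ast}(s)>V(s)\}$ is countable (the open intervals $(V(s),V^{\ast}(s))$ are pairwise disjoint since $V^{\ast}(s_2)\le V(s_1)$ for $s_1<s_2$) combined with the absolute continuity of $w(s)\,ds$ gives $\int w/V^{\ast}=\int w/V$, hence $B\le C$. Taken together this gives $B$ as the best constant. This is the canonical argument for characterizing weighted Hardy-type inequalities on the cone of monotone functions, and it is in substance what one finds in \cite{GulJMS2013} and in the earlier Guliyev references \cite{GulDoc,GulBook,GulJIA}; your treatment of the $V$ versus $V^{\ast}$ distinction via a countability argument is a careful and fully rigorous way to close the gap that informal presentations of this lemma sometimes gloss over.
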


\begin{rem}\label{rem2.3.}
In \eqref{vav01} and \eqref{vav02} it is assumed that $\frac{1}{\i}=0$ and $0 \cdot \i=0$.
\end{rem}

The following lemma was generalization of the Guliyev lemma \cite{GulDoc, GulBook, GulJIA} for Orlicz spaces.

\begin{lem}\label{lem3.3.} Let $\a\in(0,1]$, $\Phi$ be a Young function which is lower type $p_0$ and upper type $p_1$ with $1<p_0\le p_1<\i$, $f\in L^{\Phi}_{\rm loc}(\Rn)$, $B=B(x_0,r)$, $x_0\in \mathbb{R}^n$ and $r>0$. Then for the operator $G_{\alpha}$ the following inequality is valid
\begin{equation}\label{CZdgs}
\|G_{\alpha} f\|_{L^{\Phi}(B)} \lesssim \frac{1}{\Phi^{-1}\big(r^{-n}\big)}
 \int_{2r}^{\i} \|f\|_{L^{\Phi}(B(x_0,t))}  \Phi^{-1}\big(t^{-n}\big) \frac{dt}{t}.
\end{equation}
\end{lem}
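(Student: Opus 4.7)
The plan is to split $f=f_1+f_2$ with $f_1=f\chi_{2B}$ and $f_2=f\chi_{\Rn\setminus 2B}$ where $2B=B(x_0,2r)$, use sublinearity of $G_\alpha$, and treat the two pieces by different means: the $L^\Phi$-boundedness from Theorem \ref{LiNaYaZhprop2.11-1} for $f_1$, and a direct pointwise estimate for $G_\alpha f_2(x)$ on $B$.

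For the local piece, Theorem \ref{LiNaYaZhprop2.11-1} gives $\|G_\alpha f_1\|_{L^\Phi(B)} \le \|G_\alpha f_1\|_{L^\Phi(\Rn)} \lesssim \|f\|_{L^\Phi(2B)}$. To match the right-hand side of \eqref{CZdgs}, I observe that $\|f\|_{L^\Phi(B(x_0,t))}$ is nondecreasing in $t$, so for any $t\ge 2r$ it dominates $\|f\|_{L^\Phi(2B)}$, and it remains to verify that $\int_{2r}^\infty \Phi^{-1}(t^{-n})\,\tfrac{dt}{t}\gtrsim \Phi^{-1}(r^{-n})$. The change of variables $u=t^{-n}$ turns this into $\int_0^{(2r)^{-n}}\Phi^{-1}(u)\,\tfrac{du}{nu}$; dyadic decomposition together with the consequence of the lower and upper types of $\Phi$ (namely $\Phi^{-1}(u)/u^{1/p_0}$ is essentially nonincreasing and $\Phi^{-1}(u)/u^{1/p_1}$ essentially nondecreasing) yields the desired two-sided estimate by $\Phi^{-1}((2r)^{-n})\approx \Phi^{-1}(r^{-n})$.

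The nonlocal piece is the main technical step. For $\phi\in C_\alpha$ one has $\|\phi\|_\infty\lesssim 1$ and $\mathrm{supp}\,\phi\subset\{|x|\le 1\}$, hence $|f_2*\phi_t(y)|\lesssim t^{-n}\int_{|y-z|\le t}|f_2(z)|\,dz$. Taking sup over $\phi$ and then applying Minkowski's integral inequality with the $L^2(\Gamma(x),dy\,dt/t^{n+1})$ norm pulled inside the $z$-integral gives
\[
G_\alpha f_2(x)\lesssim \int_{\Rn}|f_2(z)|\left(\iint_{\Gamma(x)} t^{-2n}\chi_{\{|y-z|\le t\}}\,\frac{dy\,dt}{t^{n+1}}\right)^{1/2}dz.
\]
For the inner integral, the constraints $|x-y|<t$ and $|y-z|\le t$ force $t\ge |x-z|/2$ and restrict $y$ to a set of measure $\lesssim t^n$; a direct computation then bounds the square root by $C|x-z|^{-n}$. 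Since $x\in B$ and $z\notin 2B$ imply $|x-z|\approx |x_0-z|$, I obtain the pointwise bound $G_\alpha f_2(x)\lesssim \int_{(2B)^c}|f(z)|\,|x_0-z|^{-n}dz$.

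The last step is to rewrite $|x_0-z|^{-n}=n\int_{|x_0-z|}^\infty t^{-n-1}dt$, apply Fubini, and use the Orlicz Hölder inequality (Lemma \ref{lemHold}) to get $\|f\|_{L^1(B(x_0,t))}\lesssim t^n\Phi^{-1}(t^{-n})\|f\|_{L^\Phi(B(x_0,t))}$, producing
\[
G_\alpha f_2(x)\lesssim \int_{2r}^\infty \Phi^{-1}(t^{-n})\,\|f\|_{L^\Phi(B(x_0,t))}\,\frac{dt}{t}
\]
uniformly in $x\in B$. Taking $\|\cdot\|_{L^\Phi(B)}$ and invoking Lemma \ref{lem4.0} via $\|\chi_B\|_{L^\Phi}=1/\Phi^{-1}(|B|^{-1})\approx 1/\Phi^{-1}(r^{-n})$ (again using upper/lower type of $\Phi$ to absorb the $v_n$ factor) finishes the nonlocal contribution, and combining both pieces yields \eqref{CZdgs}. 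The main obstacle I foresee is the clean extraction of the $|x-z|^{-n}$ decay for the nonlocal piece, which requires careful bookkeeping of the intersection of cones and of $\phi_t$'s support inside Minkowski's inequality.
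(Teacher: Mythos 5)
Your proposal is correct and follows essentially the same route as the paper: the same splitting $f=f_1+f_2$, the $L^\Phi$-boundedness of $G_\alpha$ for the local piece, and the pointwise bound $G_\alpha f_2(x)\lesssim\int_{(2B)^c}|f(z)|\,|x_0-z|^{-n}\,dz$ via Minkowski's inequality followed by Fubini and the Orlicz--H\"older inequality for the nonlocal piece. The only substantive difference is how you establish the auxiliary inequality $\Phi^{-1}(r^{-n})\lesssim\int_{2r}^\infty\Phi^{-1}(t^{-n})\,\frac{dt}{t}$ needed to absorb the local term: you use a dyadic decomposition driven by the lower/upper type conditions, while the paper gets it more directly from $\Phi^{-1}(s)\widetilde\Phi^{-1}(s)\approx s$ (inequality \eqref{2.3}) and the monotonicity of $\widetilde\Phi^{-1}$; both work, and the paper's version has the minor advantage of not invoking the type hypotheses at that step.
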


\begin{proof}
With the notation $2B=B(x_0,2r)$, we represent  $f$ as
$$
f=f_1+f_2, \ \quad f_1(y)=f(y)\chi _{2B}(y),\quad
 f_2(y)=f(y)\chi_{\dual {(2B)}}(y),
$$
and then
$$
\|G_{\alpha}f\|_{L^{\Phi}(B)} \le \|G_{\alpha}f_1\|_{L^{\Phi}(B)} +\|G_{\alpha}f_2\|_{L^{\Phi}(B)}.
$$
Since $f_1\in L^{\Phi}(\Rn)$, by Theorem \ref{LiNaYaZhprop2.11-1}, it follows that
\begin{equation*}
\|G_{\alpha}f_1\|_{L^\Phi(B)}\leq \|G_{\alpha}f_1\|_{L^\Phi(\Rn)}\leq C\|f_1\|_{L^\Phi(\Rn)}=C\|f\|_{L^\Phi(2B)}.
\end{equation*}
Then let us estimate $\|G_{\alpha}f_2\|_{L^{\Phi}(B)}$.
$$
|f_{2}*\displaystyle \phi_{t}(y)|=\left|t^{-n}\int_{|y-z|\leq t}\phi(\frac{y-z}{t})f_{2}(z)dz\right|\leq t^{-n}\int_{|y-z|\leq t}|f_{2}(z) |dz.
$$
Since $x\in B(x_{0},r)$, $(y,t)\in\Gamma(x)$, we have $|x-z|\leq|z-y|+|x-y|\leq 2t$, and
$$
r\leq|x_{0}-z|-|x_{0}-x|\leq|x-z|\leq|x-y|+|y-z|\leq 2t.
$$
So, we obtain
\begin{eqnarray*}
G_{\alpha}f_{2}(x) &\leq& \left(\int \int_{\Gamma(x)}\left|t^{-n}\int_{|y-z|\leq t}|f_{2}(z)|dz\right|^{2}\frac{dydt}{t^{n+1}}\right)^{\frac{1}{2}}
\\
{}&\leq&\left(\int_{t>r/2}\int_{|x-y|<t}\left(\int_{|x-z|\leq 2t}|f_{2}(z)|dz\right)^{2}\frac{dydt}{t^{3n+1}}\right)^{\frac{1}{2}}
\\
{}&\lesssim& \left(\int_{t>r/2}\left(\int_{|x-z|\leq 2t}|f_{2}(z)|dz\right)^{2}\frac{dt}{t^{2n+1}}\right)^{\frac{1}{2}}.
\end{eqnarray*}
By Minkowski's inequality and $|x-z|\displaystyle \geq|x_{0}-z|-|x_{0}-x|\geq\frac{1}{2}|x_{0}-z|$, we have
\begin{eqnarray}\label{estGf2}
G_{\alpha}f_{2}(x) &\lesssim& \int_{\mathbb{R}^{n}}\left(\int_{t>\frac{|x-z|}{2}}\frac{dt}{t^{2n+1}}\right)^{\frac{1}{2}}|f_{2}(z)|dz \nonumber
\\
{}&\lesssim&\int_{|x_{0}-z|>2r}\frac{|f(z)|}{|x-z|^{n}}dz\lesssim\int_{|x_{0}-z|>2r}\frac{|f(z)|}{|x_{0}-z|^{n}}dz \nonumber
\\
{}&=&\int_{|x_{0}-z|>2r}|f(z)|\int_{|x_{0}-z|}^{+\infty}\frac{dt}{t^{n+1}}dz \nonumber
\\
{}&=&\int_{2r}^{\infty}\int_{2r<|x_{0}-z|<t}|f(z)|dz\, \frac{dt}{t^{n+1}} \nonumber
\\
{}&\lesssim&\int_{2r}^{\i} \|f\|_{L^{\Phi}(B(x_0,t))}  \Phi^{-1}\big(t^{-n}\big) \frac{dt}{t}.
\end{eqnarray}
The last inequality is by Lemma \ref{lemHold}. Moreover,
\begin{equation} \label{ves2}
\|G_{\alpha}f_2\|_{L^{\Phi}(B)}\lesssim
\frac{1}{\Phi^{-1}\big(r^{-n}\big)}\int_{2r}^{\i}\|f\|_{L^{\Phi}(B(x_0,t))}
\Phi^{-1}\big(t^{-n}\big) \frac{dt}{t}.
\end{equation}
Thus
\begin{equation*}
\|G_{\alpha}f\|_{L^\Phi(B)}\lesssim \|f\|_{L^{\Phi}(2B)}+ \frac{1}{\Phi^{-1}\big(r^{-n}\big)}
\int_{2r}^{\i}\|f\|_{L^\Phi (B(x_0,t))}\Phi^{-1}\big(t^{-n}\big) \frac{dt}{t}.
\end{equation*}
On the other hand, by  \eqref{2.3} we get
\begin{eqnarray*}
\Phi^{-1}\big(r^{-n}\big)&\thickapprox & \Phi^{-1}\big(r^{-n}\big) r^n \int_{2r}^{\i}\frac{dt}{t^{n+1}}
\\
& \lesssim & \int_{2r}^{\i} \Phi^{-1}\big(t^{-n}\big) \frac{dt}{t}
\end{eqnarray*}
and then
\begin{equation} \label{ves2fd}
\|f\|_{L^{\Phi}(2B)}\lesssim
\frac{1}{\Phi^{-1}\big(r^{-n}\big)}\int_{2r}^{\i} \|f\|_{L^{\Phi}(B(x_0,t))} \Phi^{-1}\big(t^{-n}\big) \frac{dt}{t}.
\end{equation}
Thus
\begin{equation*}
\|G_{\alpha}f\|_{L^{\Phi}(B)}\lesssim \frac{1}{\Phi^{-1}\big(r^{-n}\big)} \int_{2r}^{\i}
\|f\|_{L^\Phi (B(x_0,t))} \Phi^{-1}\big(t^{-n}\big) \frac{dt}{t}.
\end{equation*}
\end{proof}

\begin{thm}\label{thm4.4.}
Let $\a\in(0,1]$, $\Phi$ be a Young function which is lower type $p_0$ and upper type $p_1$ with $1<p_0\le p_1<\i$ and the functions $(\varphi_1,\varphi_2)$ and $\Phi$ satisfy the condition
\begin{equation}\label{eq3.6.VZPot}
\int_{r}^{\i} \Big(\es_{t<s<\i}\frac{\varphi_1(x,s)}{\Phi^{-1}\big(s^{-n}\big)}\Big) \, \Phi^{-1}\big(t^{-n}\big)\frac{dt}{t}  \le C \, \varphi_2(x,r),
\end{equation}
where $C$ does not depend on $x$ and $r$.
Then $G_{\alpha}$ is bounded from $M^{\Phi,\varphi_1}(\Rn)$ to $M^{\Phi,\varphi_2}(\Rn)$.
\end{thm}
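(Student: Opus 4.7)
The plan is to combine the local $L^{\Phi}$-norm estimate of Lemma~\ref{lem3.3.} with the weighted Hardy inequality of Theorem~\ref{thm3.2.}, in what is by now the standard scheme for boundedness results of this type on generalized Orlicz--Morrey spaces. First I would fix $x_{0}\in\Rn$ and $r>0$ and apply Lemma~\ref{lem3.3.} to obtain
\[
\Phi^{-1}\!\big(r^{-n}\big)\,\|G_{\alpha}f\|_{L^{\Phi}(B(x_{0},r))}\lesssim \int_{2r}^{\infty}\|f\|_{L^{\Phi}(B(x_{0},t))}\,\Phi^{-1}\!\big(t^{-n}\big)\,\frac{dt}{t}\le \int_{r}^{\infty}\|f\|_{L^{\Phi}(B(x_{0},t))}\,\Phi^{-1}\!\big(t^{-n}\big)\,\frac{dt}{t},
\]
where the last step uses nonnegativity of the integrand to replace the lower limit $2r$ by $r$. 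Dividing by $\varphi_{2}(x_{0},r)$ and eventually taking the supremum over $x_{0}\in\Rn$ and $r>0$ will turn the left-hand side into $\|G_{\alpha}f\|_{M^{\Phi,\varphi_{2}}}$, so the task reduces to bounding the resulting double supremum of the right-hand integral by $\|f\|_{M^{\Phi,\varphi_{1}}}$.

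At this step I would invoke Theorem~\ref{thm3.2.} for each fixed $x_{0}$ with the choices
\[
g(t)=\|f\|_{L^{\Phi}(B(x_{0},t))},\quad w(s)=\frac{\Phi^{-1}(s^{-n})}{s},\quad v_{2}(t)=\varphi_{2}(x_{0},t)^{-1},\quad v_{1}(t)=\frac{\Phi^{-1}(t^{-n})}{\varphi_{1}(x_{0},t)}.
\]
The crucial observation is that $t\mapsto\|f\|_{L^{\Phi}(B(x_{0},t))}$ is nonnegative and nondecreasing, so the monotonicity hypothesis of Theorem~\ref{thm3.2.} is met. With the above weights the abstract condition \eqref{vav02} becomes
\[
\sup_{r>0}\varphi_{2}(x_{0},r)^{-1}\int_{r}^{\infty}\Big(\es_{s<\tau<\infty}\frac{\varphi_{1}(x_{0},\tau)}{\Phi^{-1}(\tau^{-n})}\Big)\Phi^{-1}\!\big(s^{-n}\big)\frac{ds}{s}\le C,
\]
which, after renaming $s\mapsto t$ and $\tau\mapsto s$, is exactly hypothesis \eqref{eq3.6.VZPot} (and holds uniformly in $x_{0}$ since the constant there is independent of $x$). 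Applying Theorem~\ref{thm3.2.} I then get
\[
\sup_{r>0}\varphi_{2}(x_{0},r)^{-1}\int_{r}^{\infty}\|f\|_{L^{\Phi}(B(x_{0},s))}\Phi^{-1}\!\big(s^{-n}\big)\frac{ds}{s}\lesssim \sup_{t>0}\frac{\Phi^{-1}(t^{-n})}{\varphi_{1}(x_{0},t)}\|f\|_{L^{\Phi}(B(x_{0},t))}\le \|f\|_{M^{\Phi,\varphi_{1}}},
\]
and combining this with the first display and taking the supremum in $x_{0}$ delivers $\|G_{\alpha}f\|_{M^{\Phi,\varphi_{2}}}\lesssim \|f\|_{M^{\Phi,\varphi_{1}}}$.

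Because Lemma~\ref{lem3.3.} absorbs all of the analytic work and Theorem~\ref{thm3.2.} is used as a black box, there is no genuine obstacle beyond careful bookkeeping. The single point that does require care is the identification of the four weights so that the abstract condition \eqref{vav02} coincides with the hypothesis \eqref{eq3.6.VZPot}; the rewriting $1/\sup_{s<\tau<\infty}v_{1}(\tau)=\es_{s<\tau<\infty}\varphi_{1}(x_{0},\tau)/\Phi^{-1}(\tau^{-n})$ is precisely what converts the sup in Theorem~\ref{thm3.2.} into the essinf appearing in the theorem statement.
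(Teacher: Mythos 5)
Your proof follows exactly the paper's own argument: Lemma~\ref{lem3.3.} reduces the claim to a weighted Hardy inequality, which is then settled by Theorem~\ref{thm3.2.} with the very same choice of weights $g$, $w$, $v_1$, $v_2$ that the paper's two-line proof leaves implicit. You have simply written out the bookkeeping the paper suppresses; the argument is correct and there is no substantive difference.
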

\begin{proof}
By Lemma \ref{lem3.3.} and Theorem \ref{thm3.2.} we have
\begin{equation*}
\begin{split}
\|G_{\a} f\|_{M^{\Phi,\varphi_2}(\Rn)} & \lesssim \sup\limits_{x\in\Rn, r>0}
\varphi_2(x,r)^{-1}\,
\int_{r}^{\i} \Phi^{-1}\big(t^{-n}\big) \|f\|_{L^{\Phi}(B(x,t))}\frac{dt}{t}
\\
& \lesssim \sup\limits_{x\in\Rn, r>0}
\varphi_1(x,r)^{-1} \Phi^{-1}\big(r^{-n}\big) \|f\|_{L^{\Phi}(B(x,r))}
\\
& = \|f\|_{M^{\Phi,\varphi_1}(\Rn)}.
\end{split}
\end{equation*}
\end{proof}

In \cite{Wilson1}, the author proved that the functions $G_{\alpha}f$ and $g_{\alpha}f$ are pointwise comparable. Thus, as a consequence of Theorem \ref{thm4.4.}, we have the following result.
\begin{cor}\label{wucor1.5}
Let $\a\in(0,1]$, $\Phi$ be a Young function which is lower type $p_0$ and upper type $p_1$ with $1<p_0\le p_1<\i$ and the functions $(\varphi_1,\varphi_2)$ and $\Phi$ satisfy the condition \eqref{eq3.6.VZPot}, then $g_{\alpha}$ is bounded from $M^{\Phi,\varphi_1}(\Rn)$ to $M^{\Phi,\varphi_2}(\Rn)$.
\end{cor}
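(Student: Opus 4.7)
My plan is to exploit the pointwise comparability between $g_{\alpha}f$ and $G_{\alpha}f$ established by Wilson \cite{Wilson1} and explicitly invoked just before the statement of the corollary. In particular, there is a constant $C>0$ such that $g_{\alpha}f(x)\le C\,G_{\alpha}f(x)$ for every $x\in\Rn$ and every $f\in L^{1}_{\mathrm{loc}}(\Rn)$. This pointwise domination reduces the claim to Theorem \ref{thm4.4.} in a purely formal way, so no new harmonic-analytic machinery is needed.

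The argument unfolds in two short steps. First, since the Luxemburg norm on $L^{\Phi}$ is monotone with respect to pointwise inequalities of nonnegative functions, the pointwise estimate above implies, for every ball $B=B(x_0,r)$,
$$
\|g_{\alpha}f\|_{L^{\Phi}(B)} \le C\,\|G_{\alpha}f\|_{L^{\Phi}(B)}.
$$
Second, multiplying by $\varphi_2(x_0,r)^{-1}\Phi^{-1}(r^{-n})$ and taking the supremum over $x_0\in\Rn$ and $r>0$ gives
$$
\|g_{\alpha}f\|_{M^{\Phi,\varphi_2}(\Rn)} \le C\,\|G_{\alpha}f\|_{M^{\Phi,\varphi_2}(\Rn)} \lesssim \|f\|_{M^{\Phi,\varphi_1}(\Rn)},
$$
where the last inequality is precisely Theorem \ref{thm4.4.} applied under the hypothesis \eqref{eq3.6.VZPot}.

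There is essentially no technical obstacle in this deduction; the only substantive ingredients are the classical pointwise domination $g_{\alpha}\lesssim G_{\alpha}$ from \cite{Wilson1}, the monotonicity of the Orlicz norm, and the already-proved Theorem \ref{thm4.4.}. If one preferred a self-contained proof that avoids appealing to Theorem \ref{thm4.4.}, one could instead repeat the argument of Lemma \ref{lem3.3.} for $g_{\alpha}$ (using the $L^{\Phi}$-boundedness of $g_{\alpha}$, which follows from Theorem \ref{LiNaYaZhprop2.11-1} together with the pointwise bound) to obtain
$$
\|g_{\alpha}f\|_{L^{\Phi}(B)} \lesssim \frac{1}{\Phi^{-1}\bigl(r^{-n}\bigr)}\int_{2r}^{\infty}\|f\|_{L^{\Phi}(B(x_0,t))}\,\Phi^{-1}\bigl(t^{-n}\bigr)\frac{dt}{t},
$$
and then apply Theorem \ref{thm3.2.} exactly as in the proof of Theorem \ref{thm4.4.}. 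Either route yields the corollary with only minor notational changes.
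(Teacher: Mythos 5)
Your argument is correct and coincides with the paper's: the corollary is deduced from the pointwise comparability $g_{\alpha}f \lesssim G_{\alpha}f$ of Wilson, the monotonicity of the Luxemburg norm, and Theorem~\ref{thm4.4.}. The paper leaves these routine steps implicit; you have simply spelled them out.
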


The following lemma is an easy consequence of the inequality $$G_{\alpha,\beta}(f)(x)\leq\beta^{\frac{3n}{2}+\alpha}G_{\alpha}(f)(x)$$ which was proved in \cite{Wilson1} and the monotonicity of the norm $\|\cdot\|_{L^{\Phi}}$.
\begin{lem}\label{wulem2.3}
For $j\in \mathrm{Z}^{+}$, denote
$$
G_{\alpha,2^{j}}(f)(x)=\left(\int_{0}^{\infty}\int_{|x-y|\leq 2^{j}t}(A_{\alpha}f(y,t))^{2}\frac{dydt}{t^{n+1}}\right)^{\frac{1}{2}}
$$
Let $\Phi$ be a Young function and $0<\alpha\leq 1$, then we have
$$
\Vert G_{\alpha,2^{j}}(f)\Vert_{L^{\Phi}(\mathbb{R}^{n})}\lesssim 2^{j(\frac{3n}{2}+\alpha)}\Vert G_{\alpha}(f)\Vert_{L^{\Phi}(\mathbb{R}^{n})}.
$$
\end{lem}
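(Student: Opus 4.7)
The statement is essentially a direct corollary of two facts already available: the pointwise comparison
$G_{\alpha,\beta}(f)(x)\leq\beta^{\frac{3n}{2}+\alpha}G_{\alpha}(f)(x)$ proved by Wilson (cited in the introduction, with $\beta=2^{j}$ in our setting), and the monotonicity of the Orlicz norm with respect to pointwise domination of non-negative functions. The plan is to chain these two observations together.

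First I would observe that by the very definition of $G_{\alpha,\beta}$ with $\beta=2^{j}$, the function $G_{\alpha,2^{j}}(f)$ written in the lemma coincides with $G_{\alpha,\beta}(f)$ for $\beta=2^{j}$ (the cone $\Gamma_{\beta}(x)=\{(y,t):|x-y|<\beta t\}$ is exactly the integration region $|x-y|\leq 2^{j}t$ up to a set of measure zero). Therefore Wilson's pointwise estimate gives, for every $x\in\mathbb{R}^{n}$,
$$
G_{\alpha,2^{j}}(f)(x)\leq 2^{j\bigl(\frac{3n}{2}+\alpha\bigr)}\,G_{\alpha}(f)(x).
$$

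Second, I would invoke the monotonicity of the Orlicz norm: if $0\leq F\leq G$ almost everywhere, then $\|F\|_{L^{\Phi}(\mathbb{R}^{n})}\leq\|G\|_{L^{\Phi}(\mathbb{R}^{n})}$. This is immediate from the Luxemburg-norm definition, since $\Phi$ is non-decreasing, so $\int_{\mathbb{R}^{n}}\Phi(F(x)/\lambda)\,dx\leq\int_{\mathbb{R}^{n}}\Phi(G(x)/\lambda)\,dx$ for every $\lambda>0$, hence any admissible $\lambda$ for $G$ is admissible for $F$. Applying this with $F=G_{\alpha,2^{j}}(f)$ and $G=2^{j(\frac{3n}{2}+\alpha)}\,G_{\alpha}(f)$, and using positive homogeneity of the norm to pull the constant $2^{j(\frac{3n}{2}+\alpha)}$ outside, yields the claimed estimate.

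There is really no genuine obstacle here: the only thing to verify carefully is that $\|\cdot\|_{L^{\Phi}}$ respects pointwise domination of non-negative functions (which requires only monotonicity of $\Phi$, not any $\Delta_{2}$/$\nabla_{2}$ assumption), so the lemma holds for an arbitrary Young function $\Phi$ as stated.
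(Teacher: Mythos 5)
Your proof is correct and is exactly the argument the paper has in mind: the paper explicitly introduces this lemma as ``an easy consequence of the inequality $G_{\alpha,\beta}(f)(x)\leq\beta^{\frac{3n}{2}+\alpha}G_{\alpha}(f)(x)$ \ldots\ and the monotonicity of the norm $\|\cdot\|_{L^{\Phi}}$.'' You have simply filled in the (entirely routine) details of that one-line justification, including the harmless measure-zero discrepancy between the cones $|x-y|<\beta t$ and $|x-y|\le 2^{j}t$, and the fact that monotonicity of the Luxemburg norm needs nothing beyond $\Phi$ being non-decreasing.
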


\begin{thm}\label{thm4.4.x}
Let $\a\in(0,1]$, $\Phi$ be a Young function which is lower type $p_0$ and upper type $p_1$ with $1<p_0\le p_1<\i$ and the functions $(\varphi_1,\varphi_2)$ and $\Phi$ satisfy the condition
\begin{equation}\label{eq3.6.VZPotx}
\int_{r}^{\i} \es_{t<s<\i}\frac{\varphi_1(x,s)}{\Phi^{-1}\big(s^{-n}\big)}\Phi^{-1}\big(t^{-n}\big)\frac{dt}{t}  \le C \, \varphi_2(x,r),
\end{equation}
where $C$ does not depend on $x$ and $r$.
Then for $\lambda >3+\displaystyle \frac{2\alpha}{n}$, $g_{\lambda,\alpha}^{*}$ is bounded from $M^{\Phi,\varphi_1}(\Rn)$ to $M^{\Phi,\varphi_2}(\Rn)$.
\end{thm}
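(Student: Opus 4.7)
The plan is to reduce Theorem \ref{thm4.4.x} to the corresponding result for $G_{\alpha}$, namely Theorem \ref{thm4.4.}, by establishing the pointwise comparison
\[
g_{\lambda,\alpha}^{*}(f)(x)\lesssim G_{\alpha}(f)(x)
\]
for every $\lambda>3+\frac{2\alpha}{n}$. Once this is in hand, the boundedness from $M^{\Phi,\varphi_1}(\Rn)$ to $M^{\Phi,\varphi_2}(\Rn)$ is immediate from Theorem \ref{thm4.4.}, since condition \eqref{eq3.6.VZPotx} is simply \eqref{eq3.6.VZPot} applied to the same pair $(\varphi_1,\varphi_2)$, and the Orlicz norm on each ball $B(x_0,r)$ is monotone.

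The heart of the argument is a dyadic decomposition of $\mathbb{R}_+^{n+1}$ in the defining integral for $g_{\lambda,\alpha}^{*}(f)(x)^{2}$. I would write this integral as the sum of the piece over $\Gamma(x)=\{(y,t):|x-y|<t\}$ and the pieces over the annular sets $\{2^{j-1}t\le|x-y|<2^{j}t\}$, $j\ge1$. On the $j$-th annulus the kernel satisfies $(t/(t+|x-y|))^{n\lambda}\le 2^{(1-j)n\lambda}$, and the region is contained in $\Gamma_{2^{j}}(x)$, so each such piece is bounded by a constant multiple of $G_{\alpha,2^{j}}(f)(x)^{2}$. This yields
\[
g_{\lambda,\alpha}^{*}(f)(x)^{2}\le G_{\alpha}(f)(x)^{2}+C\sum_{j=1}^{\infty}2^{-jn\lambda}G_{\alpha,2^{j}}(f)(x)^{2}.
\]
Next I would invoke Wilson's pointwise bound $G_{\alpha,2^{j}}(f)(x)\le 2^{j(3n/2+\alpha)}G_{\alpha}(f)(x)$, recorded in Section~1 from \cite{Wilson1}, to majorize the right-hand side by
\[
G_{\alpha}(f)(x)^{2}\Bigl(1+C\sum_{j=1}^{\infty}2^{-j(n\lambda-3n-2\alpha)}\Bigr).
\]
The geometric series converges precisely when $n\lambda-3n-2\alpha>0$, i.e.\ exactly under the hypothesis $\lambda>3+\frac{2\alpha}{n}$, giving the claimed pointwise domination.

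The main (essentially only) obstacle is the bookkeeping that pins the summability threshold down to the stated range of $\lambda$; every other ingredient is already in place. If one prefers to parallel the structure of Lemma \ref{lem3.3.} rather than appeal to the pointwise comparison, one can instead decompose $f=f_{1}+f_{2}$ with $f_{1}=f\chi_{2B}$, estimate $\|g_{\lambda,\alpha}^{*}f_{1}\|_{L^{\Phi}(B)}$ by applying $\|\cdot\|_{L^{\Phi}}$ to the above dyadic pointwise sum and using Lemma \ref{wulem2.3} combined with Theorem \ref{LiNaYaZhprop2.11-1} (again summable exactly when $\lambda>3+\frac{2\alpha}{n}$), and handle the tail by repeating the Minkowski/Hölder computation of Lemma \ref{lem3.3.} on each $G_{\alpha,2^{j}}(f_{2})(x)$ and summing; the two routes lead to the same inequality
\[
\|g_{\lambda,\alpha}^{*}f\|_{L^{\Phi}(B(x_0,r))}\lesssim\frac{1}{\Phi^{-1}(r^{-n})}\int_{2r}^{\infty}\|f\|_{L^{\Phi}(B(x_0,t))}\,\Phi^{-1}(t^{-n})\,\frac{dt}{t},
\]
and then Theorem \ref{thm3.2.} (exactly as in the proof of Theorem \ref{thm4.4.}) closes the argument.
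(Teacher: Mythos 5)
Your primary route is correct and is a genuine simplification of the paper's own argument, though it starts from the same dyadic decomposition. The paper writes $[g_{\lambda,\alpha}^{*}(f)]^{2}\le (G_{\alpha}f)^{2}+C\sum_{j\ge1}2^{-jn\lambda}(G_{\alpha,2^{j}}f)^{2}$, passes to the norm inequality $\|g_{\lambda,\alpha}^{*}f\|_{M^{\Phi,\varphi_2}}\le\|G_{\alpha}f\|_{M^{\Phi,\varphi_2}}+\sum_{j\ge1}2^{-jn\lambda/2}\|G_{\alpha,2^{j}}f\|_{M^{\Phi,\varphi_2}}$, and then re-runs the Lemma~\ref{lem3.3.} machinery \emph{for each} $G_{\alpha,2^{j}}$: a separate $f=f\chi_{2B}+f\chi_{(2B)^{c}}$ split, Lemma~\ref{wulem2.3} giving $2^{j(3n/2+\alpha)}$ for the near part, and a fresh Minkowski computation giving $2^{3jn/2}$ for the far part, before Theorem~\ref{thm3.2.} is invoked $j$-by-$j$ and the geometric series summed. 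You instead push Wilson's pointwise aperture bound $G_{\alpha,2^{j}}(f)(x)\le2^{j(3n/2+\alpha)}G_{\alpha}(f)(x)$ (the very inequality the paper records in Section~1 and from which Lemma~\ref{wulem2.3} is derived) into the dyadic sum, obtaining the pointwise domination $g_{\lambda,\alpha}^{*}(f)\lesssim G_{\alpha}(f)$ once and for all under $\lambda>3+2\alpha/n$; monotonicity of the Luxemburg norm on each ball and Theorem~\ref{thm4.4.} then close the argument immediately, collapsing the per-$j$ bookkeeping to a single line. The convergence threshold you identify is the same as the paper's, as it must be. Your fallback Route~2 essentially reproduces the paper; one small remark there is that when repeating the Minkowski/H\"older tail estimate for $G_{\alpha,2^{j}}(f_{2})$ one picks up an explicit aperture factor (the paper obtains $2^{3jn/2}$) which must be carried so that the $j$-sum remains geometric — you acknowledge this implicitly via the summability condition, but it is worth making the factor explicit when writing out that variant.
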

\begin{proof}
\begin{eqnarray*}
[g_{\lambda,\alpha}^{*}(f)(x)]^{2}& = &\displaystyle \int_{0}^{\infty}\int_{|x-y|<t}\left(\frac{t}{t+|x-y|}\right)^{n\lambda}(A_{\alpha}f(y,t))^{2}\frac{dydt}{t^{n+1}}
\\
{}&{}&+\int_{0}^{\infty}\int_{|x-y|\geq t}\left(\frac{t}{t+|x-y|}\right)^{n\lambda}(A_{\alpha}f(y,t))^{2}\frac{dydt}{t^{n+1}}
\\
{}&:=&I+II.
\end{eqnarray*}
First, let us estimate I.
$$
I\leq \displaystyle \int_{0}^{+\infty}\int_{|x-y|<t}(A_{\alpha}f(y,t))^{2}\frac{dydt}{t^{n+1}}\leq(G_{\alpha}f(x))^{2}
$$
Now, let us estimate II.
\begin{eqnarray*}
II &\leq& \displaystyle \sum_{j=1}^{\infty}\int_{0}^{\infty}\int_{2^{j-1}t\leq|x-y|\leq 2^{j}t}\left(\frac{t}{t+|x-y|}\right)^{n\lambda}(A_{\alpha}f(y,t))^{2}\frac{dydt}{t^{n+1}}
\\
{}&\lesssim &\sum_{j=1}^{\infty}\int_{0}^{\infty}\int_{2^{j-1}t\leq|x-y|\leq 2^{j}t}2^{-jn\lambda}(A_{\alpha}f(y,t))^{2}\frac{dydt}{t^{n+1}}
\\
{}&\lesssim & \sum_{j=1}^{\infty}2^{-jn\lambda}\int_{0}^{\infty}\int_{|x-y|\leq 2^{j}t}(A_{\alpha}f(y,t))^{2}\frac{dydt}{t^{n+1}}
\\
{}&:=&\sum_{j=1}^{\infty}2^{-jn\lambda}(G_{\alpha,2^{j}}(f)(x))^{2}
\end{eqnarray*}
Thus,
\begin{equation}\label{wueq7}
\Vert g_{\lambda,\alpha}^{*}(f) \Vert_{M^{\Phi,\varphi_2}(\Rn)}\leq\Vert G_{\alpha}f \displaystyle \Vert_{M^{\Phi,\varphi_2}(\Rn)}+\sum_{j=1}^{\i} 2^{-\frac{jn\lambda}{2}}\Vert G_{\alpha,2^{j}}(f) \Vert_{M^{\Phi,\varphi_2}(\Rn)}.
\end{equation}
By Theorem \ref{thm4.4.}, we have
\begin{equation}\label{wueq8}
\Vert G_{\alpha}f\Vert_{M^{\Phi,\varphi_2}(\Rn)}\lesssim\Vert f\Vert_{M^{\Phi,\varphi_1}(\Rn)}.
\end{equation}
In the following, we will estimate $\Vert G_{\alpha,2^{j}}(f) \Vert_{M^{\Phi,\varphi_2}(\Rn)}$. We divide $\Vert G_{\alpha,2^{j}}(f)\Vert_{L^{\Phi}(B)}$ into two parts.
\begin{equation}\label{wueq9}
\Vert G_{\alpha,2^{j}}(f)\Vert_{L^{\Phi}(B)}\leq\Vert G_{\alpha,2^{j}}(f_{1})\Vert_{L^{\Phi}(B)}+\Vert G_{\alpha,2^{j}}(f_{2})\Vert_{L^{\Phi}(B)},
\end{equation}
where $f_{1}(y)=f(y)\chi_{2B}(y)$ , $f_{2}(y)=f(y)-f_{1}(y)$ . For the first part, by Lemma \ref{wulem2.3} and \eqref{ves2fd},
$$
\Vert G_{\alpha,2^{j}}(f_{1})\Vert_{L^{\Phi}(B)} \lesssim 2^{j(\frac{3n}{2}+\alpha)}\Vert G_{\alpha}(f_{1})\Vert_{L^{\Phi}(\mathbb{R}^{n})}\lesssim2^{j(\frac{3n}{2}+\alpha)}\Vert f\Vert_{L^{\Phi}(2B)}
$$
\begin{equation}\label{wueq10}
\lesssim 2^{j(\frac{3n}{2}+\alpha)}\frac{1}{\Phi^{-1}\big(r^{-n}\big)}\int_{2r}^{\i} \|f\|_{L^{\Phi}(B(x_0,t))} \Phi^{-1}\big(t^{-n}\big) \frac{dt}{t}.
\end{equation}
For the second part.
\begin{eqnarray*}
G_{\alpha,2^{j}}(f_{2})(x) &=& \left(\displaystyle \int_{0}^{\infty}\int_{|x-y|\leq 2^{j}t}(A_{\alpha}f(y,t))^{2}\frac{dydt}{t^{n+1}}\right)^{\frac{1}{2}}
\\
{}&=& \left(\int_{0}^{\infty}\int_{|x-y|\leq 2^{j}t}\left(\sup_{\phi\in C_{\alpha}}|f*\phi_{t}(y)|\right)^{2}\frac{dydt}{t^{n+1}}\right)^{\frac{1}{2}}
\\
{}&\leq& \left(\int_{0}^{\infty}\int_{|x-y|\leq 2^{j}t}\left(\int_{|z-y|\leq t}|f_{2}(z)|dz\right)^{2}\frac{dydt}{t^{3n+1}}\right)^{\frac{1}{2}}.
\end{eqnarray*}
Since $|x-z|\leq|z-y|+|x-y|\leq 2^{j+1}t$, we get
\begin{eqnarray*}
G_{\alpha,2^{j}}(f_{2})(x) &\leq& \left(\displaystyle \int_{0}^{\infty}\int_{|x-y|\leq 2^{j}t}\left(\int_{|x-z|\leq 2^{j+1}t}|f_{2}(z)|dz\right)^{2}\frac{dydt}{t^{3n+1}}\right)^{\frac{1}{2}}
\\
{}&\leq& \left(\int_{0}^{\infty}\left(\int_{|x-z|\leq 2^{j+1}t}|f_{2}(z)|dz\right)^{2}\frac{2^{jn}dt}{t^{2n+1}}\right)^{\frac{1}{2}}
\\
{}&\leq& 2^{\frac{jn}{2}}\int_{\mathbb{R}^{n}}\left(\int_{t\geq\frac{|x-z|}{2^{j+1}}}|f_{2}(z)|^{2}\frac{1}{t^{2n+1}}dt\right)^{\frac{1}{2}}dz
\\
{}&\leq& 2^{\frac{3jn}{2}}\int_{|x_{0}-z|>2r}\frac{|f(z)|}{|x-z|^{n}}dz.
\end{eqnarray*}
For $|x-z|\displaystyle \geq|x_{0}-z|-|x_{0}-x|\geq|x_{0}-z|-\frac{1}{2}|x_{0}-z|=\frac{1}{2}|x_{0}-z|$, so by Fubini's theorem and Lemma \ref{lemHold}, we obtain
\begin{eqnarray*}
G_{\alpha,2^{j}}(f_{2})(x)&\leq& 2^{\frac{3jn}{2}}\int_{|x_{0}-z|>2r}\frac{|f(z)|}{|x_{0}-z|^{n}}dz
\\
{}&=& 2^{\frac{3jn}{2}}\int_{|x_{0}-z|>2r}|f(z)|\int_{|x_{0}-z|}^{\infty}\frac{1}{t^{n+1}}dtdz
\\
{}&\leq& 2^{\frac{3jn}{2}}\int_{2r}^{\infty}\int_{|x_{0}-z|<t}|f(z)|\frac{1}{t^{n+1}}dzdt
\\
{}&\leq& 2^{\frac{3jn}{2}}\int_{2r}^{\i} \|f\|_{L^{\Phi}(B(x_0,t))}  \Phi^{-1}\big(t^{-n}\big) \frac{dt}{t}.
\end{eqnarray*}
So,
\begin{equation}\label{wueq11}
\Vert G_{\alpha,2^{j}}(f_{2})\Vert_{L^{\Phi}(B)}\lesssim 2^{\frac{3jn}{2}}\frac{1}{\Phi^{-1}\big(r^{-n}\big)}\int_{2r}^{\i} \|f\|_{L^{\Phi}(B(x_0,t))} \Phi^{-1}\big(t^{-n}\big) \frac{dt}{t}.
\end{equation}
Combining \eqref{wueq9}, \eqref{wueq10} and \eqref{wueq11}, we have
$$
\Vert G_{\alpha,2^{j}}(f)\Vert_{L^{\Phi}(B)}\lesssim 2^{j(\frac{3n}{2}+\alpha)}\frac{1}{\Phi^{-1}\big(r^{-n}\big)}\int_{2r}^{\i} \|f\|_{L^{\Phi}(B(x_0,t))} \Phi^{-1}\big(t^{-n}\big) \frac{dt}{t}.
$$

Thus by Theorem \ref{thm3.2.} we have
\begin{eqnarray}\label{wueq12}
\|G_{\alpha,2^{j}} f\|_{M^{\Phi,\varphi_2}(\Rn)} & \lesssim & 2^{j(\frac{3n}{2}+\alpha)}\sup\limits_{x\in\Rn, r>0}
\varphi_2(x,r)^{-1}\,
\int_{r}^{\i} \Phi^{-1}\big(t^{-n}\big) \|f\|_{L^{\Phi}(B(x,t))}\frac{dt}{t}\nonumber
\\
{}& \lesssim & 2^{j(\frac{3n}{2}+\alpha)}\sup\limits_{x\in\Rn, r>0}
\varphi_1(x,r)^{-1} \Phi^{-1}\big(r^{-n}\big) \|f\|_{L^{\Phi}(B(x,r))}\nonumber
\\
{}& = & 2^{j(\frac{3n}{2}+\alpha)}\|f\|_{M^{\Phi,\varphi_1}(\Rn)}.
\end{eqnarray}
Since $\lambda >3+\displaystyle \frac{2\alpha}{n}$, by \eqref{wueq7}, \eqref{wueq8} and \eqref{wueq12}, we have the desired theorem.
\end{proof}

\

\section{Commutators of the intrinsic square functions in the spaces $M^{\Phi,\varphi}(\Rn)$}

It is well known that the commutator is an important integral operator and plays a key role in harmonic analysis. In 1965, Calderon \cite{Cald1, Cald2} studied a kind of
commutators appearing in Cauchy integral problems of Lip-line. Let $K$ be a Calder\'{o}n--Zygmund singular integral operator and $b \in BMO(\Rn)$.
A well known result of Coifman, Rochberg and Weiss \cite{CRW} states that the commutator operator $[b, K] f = K(b f)-b \, Kf$ is bounded on $L_p(\Rn)$ for $1 < p < \infty$.
The commutator of Calder\'{o}n--Zygmund operators plays an important role in studying the regularity of solutions of elliptic partial differential equations of second order. The boundedness result was generalized to other contexts and important applications to some non-linear PDEs were given by Coifman et al. \cite{CLMS}.

A function $f\in L^{1}_{loc}(\mathbb{R}^{n})$ is said to be in $BMO(\Rn)$ if
$$
\Vert f\Vert_{*}=\sup_{x\in \mathbb{R}^{n},r>0}\frac{1}{|B(x,r)|}\int_{B(x,r)}|f(y)-f_{B(x,r)}|dy<\infty,
$$
where $f_{B(x,r)}=\displaystyle \frac{1}{|B(x,r)|}\int_{B(x,r)}f(y)dy$.

Before proving the main theorems, we need the following lemmas.
\begin{lem} \label{rem2.4.}\cite{JohnNirenberg}
$(1)~~$ The John--Nirenberg inequality:
there are constants $C_1$, $C_2>0$, such that for all $b \in BMO(\Rn)$ and $\beta>0$
$$
\left| \left\{ x \in B \, : \, |b(x)-b_{B}|>\beta \right\}\right|
\le C_1 |B| e^{-C_2 \beta/\| b \|_{\ast}}, ~~~ \forall B \subset \Rn.
$$

$(2)~~$ The John--Nirenberg inequality implies that
\begin{equation} \label{lem2.4.xx}
\|b\|_\ast \thickapprox \sup_{x\in\Rn, r>0}\left(\frac{1}{|B(x,r)|} \int_{B(x,r)}|b(y)-b_{B(x,r)}|^p dy\right)^{\frac{1}{p}}
\end{equation}
for $1<p<\infty$.

$(3)~~$ Let $b \in BMO(\Rn)$. Then there is a constant $C>0$ such that
\begin{equation} \label{propBMO}
\left|b_{B(x,r)}-b_{B(x,t)}\right| \le C \|b\|_\ast \ln \frac{t}{r} \;\;\; \mbox{for} \;\;\; 0<2r<t,
\end{equation}
where $C$ is independent of $b$, $x$, $r$ and $t$.
\end{lem}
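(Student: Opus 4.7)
The lemma bundles three classical facts about $\mathrm{BMO}(\Rn)$. The plan is to prove (1) by a Calderón--Zygmund stopping-time argument, then derive (2) by integrating the distribution function produced in (1), and finally obtain (3) by telescoping between $B(x,r)$ and $B(x,t)$ along a dyadic chain.

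For (1), I would fix a ball $B \subset \Rn$ and, by homogeneity, assume $\|b\|_{*}=1$. Perform a Calderón--Zygmund decomposition of $B$ applied to $|b-b_{B}|$ at a threshold $\lambda>1$: this produces disjoint subballs $\{B_{j}^{(1)}\}$ on which the average of $|b-b_B|$ exceeds $\lambda$ but is bounded by $C\lambda$ (hence $|b_{B_j^{(1)}}-b_B|\le C\lambda$), with $|b-b_B|\le\lambda$ a.e.\ off $\bigcup_j B_{j}^{(1)}$ and $\sum_j|B_j^{(1)}|\le\lambda^{-1}|B|$. Iterating this decomposition inside each $B_j^{(1)}$ relative to $b-b_{B_j^{(1)}}$ yields, after $k$ generations, a family of balls whose total measure is at most $\lambda^{-k}|B|$ and off of which the centred oscillation has grown by at most $Ck\lambda$. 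Choosing $\beta=Ck\lambda$ and solving for $k$ in terms of $\beta$ converts the geometric measure decay into the advertised bound $C_{1}|B|e^{-C_{2}\beta/\|b\|_{*}}$, with explicit $C_{1}, C_{2}$ depending only on $n$.

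For (2), I would integrate the distribution function: for $1<p<\infty$,
\[
\frac{1}{|B|}\int_{B}|b(y)-b_{B}|^{p}\,dy=\frac{p}{|B|}\int_{0}^{\infty}\beta^{p-1}\bigl|\{y\in B:|b(y)-b_{B}|>\beta\}\bigr|\,d\beta,
\]
and insert the bound from (1). The exponential tail makes the integral converge to a multiple of $\|b\|_{*}^{p}$; taking a $p$-th root and then the supremum over balls produces one direction of the equivalence. The reverse inequality is H\"older applied directly to the definition of the BMO norm. For (3), with $0<2r<t$, let $N$ be the smallest nonnegative integer with $2^{N}r\ge t$ (so $N\le 1+\log_{2}(t/r)$) and set $B_{k}:=B(x,2^{k}r)$. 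Telescoping,
\[
b_{B(x,r)}-b_{B(x,t)}=\sum_{k=0}^{N-1}\bigl(b_{B_{k}}-b_{B_{k+1}}\bigr)+\bigl(b_{B_{N}}-b_{B(x,t)}\bigr).
\]
Since $B_{k}\subset B_{k+1}$ with $|B_{k+1}|/|B_{k}|=2^{n}$, one has
\[
|b_{B_{k}}-b_{B_{k+1}}|\le\frac{1}{|B_{k}|}\int_{B_{k+1}}|b(y)-b_{B_{k+1}}|\,dy\le 2^{n}\|b\|_{*},
\]
and the same estimate controls the residual term because $B_{N}$ and $B(x,t)$ have comparable volumes. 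Summing the $\lesssim\log(t/r)$ terms yields the bound.

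The main obstacle is clearly (1): producing an \emph{exponential} tail estimate starting from a purely first-order integrability condition requires the stopping-time iteration with its simultaneous bookkeeping of geometric measure decay and linear threshold growth. Without that dual control the argument at best yields weak-$L^{1}$ behaviour and not the $e^{-C_{2}\beta/\|b\|_{*}}$ rate. Once (1) is in hand, (2) is a standard layer-cake computation and (3) is a one-line telescoping estimate, so the remaining labour is essentially bookkeeping.
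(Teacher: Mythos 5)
The paper does not prove this lemma: it is stated as a classical fact and simply cited to John--Nirenberg, so there is no internal proof for your attempt to be compared against. Your sketch correctly outlines the standard route --- the Calder\'on--Zygmund stopping-time iteration with simultaneous geometric measure decay and linear threshold growth for (1), the layer-cake integration of the exponential tail plus Jensen/H\"older in the reverse direction for (2), and the dyadic telescoping with the one-sided comparison
$|b_{B_k}-b_{B_{k+1}}|\le \tfrac{|B_{k+1}|}{|B_k|}\,\|b\|_*\le 2^n\|b\|_*$
for (3), with the additive constant absorbed into the $\ln(t/r)$ factor because $2r<t$ forces $\ln(t/r)\ge\ln 2$. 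This is exactly what the citation refers to; no gaps.
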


\begin{lem}\label{Bmo-orlicz}
Let $b\in BMO$ and $\Phi$ be a Young function. Let $\Phi$ is lower type $p_0$ and upper type $p_1$ with $1<p_0\le p_1<\i$, then
$$
\|b\|_\ast \thickapprox \sup_{x\in\Rn, r>0}\Phi^{-1}\big(r^{-n}\big)\left\|b(\cdot)-b_{B(x,r)}\right\|_{L^{\Phi}(B(x,r))}.
$$
\end{lem}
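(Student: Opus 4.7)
The plan is to establish the two-sided bound by proving the inequalities $\lesssim$ and $\gtrsim$ separately. The first is a direct application of H\"older's inequality, while the second requires a more careful estimate and is the main obstacle.

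For the inequality $\|b\|_\ast \lesssim \sup_{x,r} \Phi^{-1}(r^{-n}) \|b - b_{B(x,r)}\|_{L^\Phi(B(x,r))}$, I would apply the generalized H\"older inequality from Theorem \ref{HolderOr} with the dual pair $(\Phi, \widetilde\Phi)$ to $|b - b_B|$ paired against $\chi_B$. Evaluating $\|\chi_B\|_{L^{\widetilde\Phi}(B)} = 1/\widetilde\Phi^{-1}(|B|^{-1})$ via Lemma \ref{lem4.0}, and using \eqref{2.3} in the form $1/\widetilde\Phi^{-1}(|B|^{-1}) \le |B|\,\Phi^{-1}(|B|^{-1})$, I obtain $\frac{1}{|B|}\int_B |b - b_B|\,dy \lesssim \Phi^{-1}(|B|^{-1})\|b - b_B\|_{L^\Phi(B)}$. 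Taking the supremum over balls gives the inequality.

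For the reverse direction, fix $B$ and set $\mu := \|b\|_\ast/\Phi^{-1}(|B|^{-1})$. It suffices to show that $\int_B \Phi(|b - b_B|/\mu)\,dy \le \widetilde C$ for an absolute constant $\widetilde C$, because Lemma \ref{Kylowupp} then yields $\|b - b_B\|_{L^\Phi(B)} \lesssim \mu$, which is the required bound. The key step is to factor the argument of $\Phi$ as $|b - b_B|/\mu = \Phi^{-1}(|B|^{-1})\, t(y)$ with $t(y) := |b(y) - b_B|/\|b\|_\ast$, and then apply the type conditions to the scaling variable $t$. On the set $\{y \in B : t(y) \ge 1\}$, the upper-type $p_1$ condition combined with $\Phi(\Phi^{-1}(|B|^{-1})) \le |B|^{-1}$ from \eqref{younginverse} gives $\Phi(\Phi^{-1}(|B|^{-1})\, t) \le C\, t^{p_1} |B|^{-1}$, and on $\{y \in B : t(y) \le 1\}$ the lower-type $p_0$ condition gives the analogous bound with $t^{p_1}$ replaced by $t^{p_0}$. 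Extending the two integrals to all of $B$ yields
\[
\int_B \Phi(|b - b_B|/\mu)\, dy \le \frac{C}{|B|\,\|b\|_\ast^{p_1}} \int_B |b - b_B|^{p_1}\, dy + \frac{C}{|B|\,\|b\|_\ast^{p_0}} \int_B |b - b_B|^{p_0}\, dy,
\]
and each summand is bounded by an absolute constant thanks to the John--Nirenberg $L^{p_i}$-characterization \eqref{lem2.4.xx}.

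The main obstacle I anticipate is precisely the scaling choice in the previous step: normalizing $t$ by $\|b\|_\ast$ rather than by $\mu$ is what makes the $|B|^{-1}$ factor coming from \eqref{younginverse} cancel against the $|B|$ appearing in the $L^{p_i}$ John--Nirenberg estimate, so that the final integral is controlled by a $B$-independent constant. A less careful application of the type conditions leaves a residual factor like $|B|\,\Phi^{-1}(|B|^{-1})^{p_i}$, which is in general unbounded as $|B|$ varies; the above renormalization is the mechanism that resolves this.
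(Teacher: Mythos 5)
Your proof is correct and follows essentially the same route as the paper: the easy direction via the Orlicz--H\"older inequality together with Lemma \ref{lem4.0} and \eqref{2.3}, and the hard direction by checking the modular condition with the normalization $\mu=\|b\|_\ast/\Phi^{-1}(|B|^{-1})$, splitting according to whether $|b-b_B|/\|b\|_\ast$ exceeds $1$ to invoke the upper/lower type conditions along with \eqref{younginverse}, invoking the John--Nirenberg $L^{p_i}$-characterization \eqref{lem2.4.xx}, and then concluding via Lemma \ref{Kylowupp}. The only cosmetic difference is that the paper reduces to $\|b\|_\ast=1$ at the outset, while you carry $\|b\|_\ast$ through explicitly.
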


\begin{proof}
By H\"{o}lder's inequality, we have
$$\|b\|_\ast \lesssim \sup_{x\in\Rn, r>0}\Phi^{-1}\big(r^{-n}\big)\left\|b(\cdot)-b_{B(x,r)}\right\|_{L^{\Phi}(B(x,r))}.$$

Now we show that
$$
\sup_{x\in\Rn, r>0}\Phi^{-1}\big(r^{-n}\big)\left\|b(\cdot)-b_{B(x,r)}\right\|_{L^{\Phi}(B(x,r))} \lesssim \|b\|_\ast .
$$
Without loss of generality, we may assume that $\|b\|_\ast=1$; otherwise,
we replace $b$ by $b/\|b\|_\ast$. By the fact that $\Phi$ is lower type $p_0$ and upper type $p_1$ and \eqref{younginverse} it follows that
$$
\int_{B(x,r)}\Phi\left(\frac{|b(y)-b_{B(x,r)}|\,\Phi^{-1}\big(|B(x,r)|^{-1}\big)}{\|b\|_\ast}\right)dy
$$
$$
=\int_{B(x,r)}\Phi\left(|b(y)-b_{B(x,r)}|\Phi^{-1}\big(|B(x,r)|^{-1}\big)\right)dy
$$
$$
\lesssim\frac{1}{|B(x,r)|}\int_{B(x,r)}\left[|b(y)-b_{B(x,r)}|^{p_0}+|b(y)-b_{B(x,r)}|^{p_1}\right]dy\lesssim 1.
$$
By Lemma \ref{Kylowupp} we get the desired result.
\end{proof}
\begin{rem}
Note the, the Lemma \ref{Kylowupp} for the variable exponent Lebesgue space $L^{p(\cdot)}$ case were proved in \cite{IzukiSaw}.
\end{rem}

The known boundedness statement for $[b,G_{\alpha}]$ on Orlicz spaces runs as follows.
\begin{thm}\cite{LiNaYaZh}\label{LiNaYaZhprop2.17-1}
Let $b\in BMO$, $\a\in(0,1]$, $\Phi$ be a Young function which is lower type $p_0$ and upper type $p_1$ with $1<p_0\le p_1<\i$. Then $[b,G_{\alpha}]$ is bounded from $L^{\Phi}(\mathbb{R}^{n})$ to itself.
\end{thm}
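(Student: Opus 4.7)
The plan is to combine a pointwise sharp maximal function estimate for $[b,G_{\alpha}]f$ with the Fefferman--Stein inequality and the boundedness of the Hardy--Littlewood maximal operator on $L^{\Phi}(\Rn)$. Since $\Phi$ has lower type $p_0>1$ and upper type $p_1<\infty$, Remark \ref{remlowup} gives $\Phi\in\Delta_2\cap\nabla_2$, which is exactly what is needed to deduce both the boundedness of $M$ on $L^{\Phi}(\Rn)$ and a Fefferman--Stein type inequality $\|g\|_{L^{\Phi}}\lesssim\|M^{\#}g\|_{L^{\Phi}}$ for a sufficiently nice class of $g$ containing $[b,G_{\alpha}]f$.

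First I would exploit the BMO splitting. Writing $b(x)-b(z)=(b(x)-b_{B})-(b(z)-b_{B})$ inside the supremum defining $A_{\alpha,b}$ and applying the triangle inequality gives the pointwise bound
\[
[b,G_{\alpha}]f(x)\le |b(x)-b_{B}|\,G_{\alpha}f(x)+G_{\alpha}\bigl((b-b_{B})f\bigr)(x),
\]
for any ball $B\ni x$. Fixing $B=B(x_0,r)$, splitting $f=f\chi_{2B}+f\chi_{(2B)^c}$ and estimating the local part by the $L^s$-boundedness of $G_{\alpha}$ (applied for some fixed $s\in(1,p_0)$), the far-field part via the kernel decay estimates already used in Lemma \ref{lem3.3.}, together with the BMO--$L^s$ mean oscillation characterization recalled in Lemma \ref{rem2.4.}, I would arrive at
\[
M^{\#}\bigl([b,G_{\alpha}]f\bigr)(x)\lesssim \|b\|_{\ast}\bigl(M_{s}(G_{\alpha}f)(x)+M_{s}f(x)\bigr),
\]
where $M_{s}g=(M|g|^s)^{1/s}$.

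To transfer this to $L^{\Phi}$, I would pick $s\in(1,p_0)$ and set $\Phi_s(t)=\Phi(t^{1/s})$; then $\Phi_s$ has lower type $p_0/s>1$ and upper type $p_1/s$, so it still lies in $\Delta_2\cap\nabla_2$ and $M$ is bounded on $L^{\Phi_s}(\Rn)$, which by the identity $\|M_sg\|_{L^{\Phi}}^s=\|M|g|^s\|_{L^{\Phi_s}}$ gives $\|M_{s}g\|_{L^{\Phi}}\lesssim\|g\|_{L^{\Phi}}$. Chaining this with the Fefferman--Stein inequality and Theorem \ref{LiNaYaZhprop2.11-1} for $G_{\alpha}$ yields
\[
\|[b,G_{\alpha}]f\|_{L^{\Phi}}\lesssim \|M^{\#}[b,G_{\alpha}]f\|_{L^{\Phi}}\lesssim \|b\|_{\ast}\bigl(\|G_{\alpha}f\|_{L^{\Phi}}+\|f\|_{L^{\Phi}}\bigr)\lesssim \|b\|_{\ast}\|f\|_{L^{\Phi}}.
\]

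The main obstacle is the sharp maximal estimate itself: the intrinsic square function carries an extra supremum over the kernel family $C_{\alpha}$, so some care is required to ensure the BMO decomposition of $b$ commutes cleanly with this supremum, and that the off-diagonal contribution coming from $f\chi_{(2B)^c}$ is controlled in terms of $M_sf$ using only the compact support plus uniform H\"{o}lder-continuity built into the definition of $C_{\alpha}$. An alternative route would be to start from the $L^p$ boundedness of $[b,G_{\alpha}]$ known since \cite{Wilson1} and use a weighted extrapolation theorem combined with Rubio de Francia's algorithm adapted to Orlicz spaces under $\Delta_2\cap\nabla_2$, but this imports heavier machinery than the sharp maximal approach outlined above.
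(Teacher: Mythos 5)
The paper does not prove this theorem; it is imported verbatim from \cite{LiNaYaZh} as a black box and used as input to Lemma \ref{lem5.1.}. So there is no internal proof to compare your proposal against, and the only thing to evaluate is whether your sketch is a plausible way to establish the cited result.

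Your high-level architecture is a standard and legitimate route for BMO-commutator estimates: split $b(x)-b(z)=(b(x)-b_B)-(b(z)-b_B)$ to obtain the pointwise bound $[b,G_{\alpha}]f(x)\le |b(x)-b_B|\,G_{\alpha}f(x)+G_{\alpha}\bigl((b-b_B)f\bigr)(x)$ (which does commute correctly with the supremum over $C_{\alpha}$ and with the $L^2(\Gamma(x))$ Minkowski step); derive a sharp maximal estimate $M^{\#}([b,G_{\alpha}]f)\lesssim\|b\|_{\ast}\bigl(M_s(G_{\alpha}f)+M_sf\bigr)$; convert to $L^{\Phi}$ via Fefferman--Stein and the boundedness of $M$ on $L^{\Phi}$ and $L^{\Phi_s}$, all of which are available because $\Phi\in\Delta_2\cap\nabla_2$ by Remark \ref{remlowup}. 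The scaling identity $\|M_sg\|_{L^{\Phi}}^s=\|M|g|^s\|_{L^{\Phi_s}}$ with $\Phi_s(t)=\Phi(t^{1/s})$ and the type computation for $\Phi_s$ are also correct.

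However, as written the proposal has a genuine gap. The entire weight of the argument rests on the sharp maximal estimate for the commutator, and you state it as a goal rather than establish it. For an intrinsic square function this is not a routine adaptation of the Calder\'on--Zygmund case: the operator is not linear in the kernel (the supremum over $C_{\alpha}$ sits inside the quadratic form), and the usual trick of subtracting $T(f\chi_{(2B)^c})(x_0)$ to control oscillation must be replaced by a square-function analogue (typically one controls the oscillation of $(G_{\alpha}f)^s$ for small $s$, or works with a sublinear surrogate). You correctly flag this as ``the main obstacle,'' but flagging it is not closing it; until that estimate is proved the chain of inequalities at the end does not yet yield the theorem. A secondary, more minor issue is the qualitative hypothesis in the Fefferman--Stein step: the inequality $\|g\|_{L^{\Phi}}\lesssim\|M^{\#}g\|_{L^{\Phi}}$ needs some a priori integrability of $g=[b,G_{\alpha}]f$, so one should first argue for $f$ bounded and compactly supported (for which $[b,G_{\alpha}]f\in L^{p}$ for some finite $p$ by the known $L^p$ theory) and then pass to general $f\in L^{\Phi}$ by density, which $\Delta_2$ makes available. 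Your suggested fallback, extrapolation from the weighted $L^p$ theory of $[b,G_{\alpha}]$ under $\Delta_2\cap\nabla_2$, is a legitimate and arguably cleaner alternative that sidesteps the need to reprove a sharp maximal estimate from scratch.
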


\begin{lem}\label{lem5.1.}
Let $b\in BMO$, $\a\in(0,1]$, $\Phi$ be a Young function which is lower type $p_0$ and upper type $p_1$ with $1<p_0\le p_1<\i$. Then the inequality
\begin{equation*}\label{eq5.1.}
\|[b,G_{\alpha}]f\|_{L^\Phi(B(x_0,r))} \lesssim  \frac{\|b\|_{*}}{\Phi^{-1}\big(r^{-n}\big)}
 \int_{2r}^{\i} \Big(1+\ln \frac{t}{r}\Big)\|f\|_{L^{\Phi}(B(x_0,t))}  \Phi^{-1}\big(t^{-n}\big) \frac{dt}{t}
\end{equation*}
holds for any ball $B(x_0,r)$ and for all $f\in L^{\Phi}_{\rm loc}(\Rn)$.
\end{lem}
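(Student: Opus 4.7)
The plan is to mirror the proof of Lemma \ref{lem3.3.} but account for the extra BMO factor. I would decompose $f = f_1+f_2$ with $f_1=f\chi_{2B}$, $B=B(x_0,r)$, and split
$$
\|[b,G_{\alpha}]f\|_{L^\Phi(B)} \le \|[b,G_{\alpha}]f_1\|_{L^\Phi(B)} + \|[b,G_{\alpha}]f_2\|_{L^\Phi(B)}.
$$
The first piece is handled by the known Orlicz boundedness of the commutator (Theorem \ref{LiNaYaZhprop2.17-1}), yielding $\|[b,G_{\alpha}]f_1\|_{L^\Phi(B)}\lesssim \|b\|_\ast \|f\|_{L^\Phi(2B)}$. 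As in the proof of Lemma \ref{lem3.3.}, the inequality $\Phi^{-1}(r^{-n})\lesssim \int_{2r}^\infty \Phi^{-1}(t^{-n})\,dt/t$ lets me absorb this local term into the right-hand side of \eqref{ves2fd} (and the factor $1+\ln(t/r)\ge 1$ is available for free).

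For the nonlocal piece I split the commutator kernel by writing $b(x)-b(z)=(b(x)-b_B)+(b_B-b(z))$, which bounds
$$
[b,G_{\alpha}]f_2(x)\le |b(x)-b_B|\,G_{\alpha}f_2(x) + G_{\alpha}\bigl((b_B-b(\cdot))f_2\bigr)(x).
$$
The first summand I attack by inserting the pointwise estimate \eqref{estGf2} from Lemma \ref{lem3.3.}, which is independent of $x$, factoring the resulting integral out of the $L^\Phi(B)$ norm, and using Lemma \ref{Bmo-orlicz} to conclude $\|b-b_B\|_{L^\Phi(B)}\lesssim \|b\|_\ast/\Phi^{-1}(r^{-n})$. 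This already gives the claimed bound (without the logarithmic factor).

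The second summand is the main obstacle: I repeat the geometric argument of Lemma \ref{lem3.3.} verbatim to reach
$$
G_{\alpha}\bigl((b_B-b(\cdot))f_2\bigr)(x)\lesssim \int_{2r}^{\infty}\frac{1}{t^{n+1}}\int_{B(x_0,t)}|b(z)-b_B|\,|f(z)|\,dz\,dt,
$$
and then estimate the inner integral by adding and subtracting $b_{B(x_0,t)}$. The term containing $|b_{B(x_0,t)}-b_B|$ is handled by Lemma \ref{rem2.4.}(3), which contributes the factor $\|b\|_\ast\ln(t/r)$ together with $\int_{B(x_0,t)}|f|\lesssim t^n\Phi^{-1}(t^{-n})\|f\|_{L^\Phi(B(x_0,t))}$ from Lemma \ref{lemHold}. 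The remaining term $\int_{B(x_0,t)}|b(z)-b_{B(x_0,t)}|\,|f(z)|\,dz$ is treated by the Orlicz Hölder inequality (Theorem \ref{HolderOr}) together with the Orlicz BMO estimate (Lemma \ref{Bmo-orlicz}) applied to $\widetilde{\Phi}$; here the key identity $\Phi^{-1}(s)\widetilde{\Phi}^{-1}(s)\approx s$ from \eqref{2.3} converts $1/\widetilde{\Phi}^{-1}(t^{-n})$ into $t^n\Phi^{-1}(t^{-n})$, producing the $\|b\|_\ast$-multiple of $t^n\Phi^{-1}(t^{-n})\|f\|_{L^\Phi(B(x_0,t))}$.

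Combining the two contributions yields
$$
G_{\alpha}\bigl((b_B-b(\cdot))f_2\bigr)(x)\lesssim \|b\|_\ast\int_{2r}^{\infty}\bigl(1+\ln(t/r)\bigr)\,\Phi^{-1}(t^{-n})\,\|f\|_{L^\Phi(B(x_0,t))}\,\frac{dt}{t},
$$
which is again independent of $x\in B$; taking $L^\Phi(B)$ norms and using $\|\chi_B\|_{L^\Phi}=1/\Phi^{-1}(r^{-n})$ from Lemma \ref{lem4.0} produces the claimed bound. The delicate step I expect to need the most care is keeping the constants in the John--Nirenberg/Orlicz BMO estimate clean so that the contributions from $|b-b_{B(x_0,t)}|$ and from $|b_{B(x_0,t)}-b_B|$ combine into precisely the factor $1+\ln(t/r)$.
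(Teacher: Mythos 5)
Your proposal is correct and follows essentially the same route as the paper: decompose $f = f_1 + f_2$, handle $f_1$ by the Orlicz boundedness of $[b,G_\alpha]$ and absorb via \eqref{ves2fd}, split the kernel of the $f_2$-term as $(b(x)-b_B)+(b_B-b(z))$, treat the first piece via \eqref{estGf2} together with Lemma \ref{Bmo-orlicz}, and treat the second by repeating the geometric/Fubini argument, adding and subtracting $b_{B(x_0,t)}$, then invoking John--Nirenberg \eqref{propBMO} and Orlicz--H\"older plus Lemma \ref{Bmo-orlicz} for $\widetilde{\Phi}$ with \eqref{2.3}. The only point you treat slightly more explicitly than the paper is the observation that applying Lemma \ref{Bmo-orlicz} to $\widetilde{\Phi}$ requires $\widetilde{\Phi}$ also to be of lower/upper type with exponents in $(1,\infty)$, which holds because $\Phi\in\Delta_2\cap\nabla_2$ forces $\widetilde{\Phi}\in\Delta_2\cap\nabla_2$ by Remark \ref{remlowup}.
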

\begin{proof}
For arbitrary $x_0 \in\Rn$, set $B=B(x_0,r)$ for the ball centered at $x_0$ and of radius $r$. Write $f=f_1+f_2$ with
$f_1=f\chi_{_{2B}}$ and $f_2=f\chi_{_{\dual (2B)}}$. Hence
$$
\left\|[b,G_{\alpha}]f \right\|_{L^\Phi(B)} \leq
\left\|[b,G_{\alpha}]f_1 \right\|_{L^\Phi(B)}+
\left\|[b,G_{\alpha}]f_2 \right\|_{L^\Phi(B)}.
$$
From the boundedness of $[b,G_{\alpha}]$ in $L^\Phi(\Rn)$  
it follows that
\begin{align*}
\|[b,G_{\alpha}]f_1\|_{L^\Phi(B)} & \leq
\|[b,G_{\alpha}]f_1\|_{L^\Phi(\Rn)}
\\
& \lesssim  \|b\|_{*} \, \|f_1\|_{L^\Phi(\Rn)} = \|b\|_{*} \, \|f\|_{L^\Phi(2B)}.
\end{align*}
For the second part, we divide it into two parts.
\begin{eqnarray*}
[b,G_{\alpha}]f_{2}(x) &=& \left(\displaystyle \int\int_{\Gamma(x)}\sup_{\phi\in C_{\alpha}}\left|\int_{\mathbb{R}^{n}}[b(x)-b(z)]\phi_{t}(y-z)f_{2}(z)dz\right|^{2}\frac{dydt}{t^{n+1}}\right)^{\frac{1}{2}}
\\
{}&\leq&\left(\int\int_{\Gamma(x)}\sup_{\phi\in C_{\alpha}}\left|\int_{\mathbb{R}^{n}}[b(x)-b_{B}]\phi_{t}(y-z)f_{2}(z)dz\right|^{2}\frac{dydt}{t^{n+1}}\right)^{\frac{1}{2}}
\\
{}&{}&+\left(\int\int_{\Gamma(x)}\sup_{\phi\in C_{\alpha}}\left|\int_{\mathbb{R}^{n}}[b_{B}-b(z)]\phi_{t}(y-z)f_{2}(z)dz\right|^{2}\frac{dydt}{t^{n+1}}\right)^{\frac{1}{2}}
\\
{}&:=& A+B.
\end{eqnarray*}
First, for $A$, we find that
$$
A=|b(x)-b_{B}|\left(\iint_{\Gamma(x)}\sup_{\phi\in C_{\alpha}}\left|\int_{\mathbb{R}^{n}}\phi_{t}(y-z)f_{2}(z)dz\right|^{2}\frac{dydt}{t^{n+1}}\right)^{\frac{1}{2}}=|b(x)-b_{B}|G_{\alpha}f_{2}(x).
$$
By \eqref{estGf2} and Lemma \ref{Bmo-orlicz}
, we can get
\begin{eqnarray*}
\left\|A\right\|_{L^\Phi(B)}&=&\left\||b(\cdot)-b_{B}|G_{\alpha}f_{2}(\cdot)\right\|_{L^\Phi(B)}
\\
{}&\leq&\left\|b(\cdot)-b_{B}\right\|_{L^\Phi(B)}\int_{2r}^{\i} \|f\|_{L^{\Phi}(B(x_0,t))}  \Phi^{-1}\big(t^{-n}\big) \frac{dt}{t}
\\
{}&\leq&\frac{\|b\|_{*}}{\Phi^{-1}\big(r^{-n}\big)}\int_{2r}^{\i} \|f\|_{L^{\Phi}(B(x_0,t))}  \Phi^{-1}\big(t^{-n}\big) \frac{dt}{t}.
\end{eqnarray*}
For $B$, since $|x-y|<t$, we get $|x-z|<2t$. Thus, by Minkowski's inequality,
\begin{eqnarray*}
B &\leq& \left(\displaystyle \int\int_{\Gamma(x)}\left|\int_{|x-z|<2t}|b_{B}-b(z)||f_{2}(z)|dz\right|^{2}\frac{dydt}{t^{3n+1}}\right)^{\frac{1}{2}}
\\
{}&\lesssim& \left(\displaystyle \int_{0}^{\infty}\left|\int_{|x-z|<2t}|b_{B}-b(z)||f_{2}(z)|dz\right|^{2}\frac{dt}{t^{2n+1}}\right)^{\frac{1}{2}}
\\
{}& \leq &  \displaystyle \int_{|x_{0}-z|>2r}|b_{B}-b(z)||f(z)|\frac{dz}{|x-z|^{n}}.
\\
\end{eqnarray*}
Since $|x-z|\displaystyle \geq\frac{1}{2}|x_{0}-z|$, we have
\begin{align*}
\left\|B\right\|_{L^\Phi(B)}&\lesssim \Big\|\int_{\dual (2B)} \frac{|b(z)-b_{B}|}{|x_0-z|^{n}}|f(z)|dz\Big\|_{L^\Phi(B)}
\\
&\thickapprox \frac{1}{\Phi^{-1}\big(r^{-n}\big)}\int_{\dual
(2B)}\frac{|b(z)-b_{B}|}{|x_0-z|^{n}}|f(z)|dz
\\
&\thickapprox \frac{1}{\Phi^{-1}\big(r^{-n}\big)}\int_{\dual
(2B)}|b(z)-b_{B}||f(z)|\int_{|x_0-z|}^{\infty}\frac{dt}{t^{n+1}}dz
\\
&\thickapprox \frac{1}{\Phi^{-1}\big(r^{-n}\big)} \int_{2r}^{\infty}\int_{2r\leq |x_0-z|\leq t}
|b(z)-b_{B}||f(z)|dz\frac{dt}{t^{n+1}}
\\
&\lesssim \frac{1}{\Phi^{-1}\big(r^{-n}\big)} \int_{2r}^{\infty}\int_{B(x_0,t)}
|b(z)-b_{B}||f(z)|dz\frac{dt}{t^{n+1}}.
\end{align*}

Applying H\"older's inequality, by Lemma \ref{Bmo-orlicz} and \eqref{propBMO}  we get
\allowdisplaybreaks
\begin{align*}
\left\|B\right\|_{L^\Phi(B)} & \lesssim \frac{1}{\Phi^{-1}\big(r^{-n}\big)} \int_{2r}^{\infty}\int_{B(x_0,t)}
|b(z)-b_{B(x_0,t)}||f(z)|dz\frac{dt}{t^{n+1}}
\\
&\quad + \frac{1}{\Phi^{-1}\big(r^{-n}\big)} \int_{2r}^{\infty}|b_{B}-b_{B(x_0,t)}|
\int_{B(x_0,t)} |f(z)|dz\frac{dt}{t^{n+1}}
\\
&\lesssim \frac{1}{\Phi^{-1}\big(r^{-n}\big)} \int_{2r}^{\infty}
\left\|b(\cdot)-b_{B(x_0,t)}\right\|_{L^{\widetilde{\Phi}}(B)} \|f\|_{L^\Phi(B(x_0,t))}\frac{dt}{t^{n+1}}
\\
& \quad + \frac{1}{\Phi^{-1}\big(r^{-n}\big)} \int_{2r}^{\infty}|b_{B}-b_{B(x_0,t)}|
\|f\|_{L^\Phi(B(x_0,t))}\Phi^{-1}\big(t^{-n}\big)\frac{dt}{t}
\\
& \lesssim \frac{\|b\|_{*}}{\Phi^{-1}\big(r^{-n}\big)}
\int_{2r}^{\infty}\Big(1+\ln \frac{t}{r}\Big)
\|f\|_{L^\Phi(B(x_0,t))}\Phi^{-1}\big(t^{-n}\big)\frac{dt}{t}.
\end{align*}
Summing $\left\|A\right\|_{L^\Phi(B)}$ and $\left\|B\right\|_{L^\Phi(B)}$, we have
\begin{equation*} \label{deckfV}
\|[b,G_{\alpha}]f_2\|_{L^\Phi(B)}
\lesssim \frac{\|b\|_{*}}{\Phi^{-1}\big(r^{-n}\big)}
\int_{2r}^{\infty}\Big(1+\ln \frac{t}{r}\Big) \|f\|_{L^\Phi(B(x_0,t))}\Phi^{-1}\big(t^{-n}\big)\frac{dt}{t}.
\end{equation*}
Finally, we obtain
\begin{align*}
\|[b,G_{\alpha}]f\|_{L^\Phi(B)} & \lesssim \|b\|_{*}\,\|f\|_{L^\Phi(2B)}
\\
& + \frac{\|b\|_{*}}{\Phi^{-1}\big(r^{-n}\big)} \int_{2r}^{\infty}\Big(1+\ln \frac{t}{r}\Big) \|f\|_{L^\Phi(B(x_0,t))}\Phi^{-1}\big(t^{-n}\big)\frac{dt}{t},
\end{align*}
and the statement of Lemma \ref{lem5.1.} follows by \eqref{ves2fd}.
\end{proof}

\begin{thm}\label{3.4.XcomT}
Let $b\in BMO$, $\a\in(0,1]$, $\Phi$ be a Young function which is lower type $p_0$ and upper type $p_1$ with $1<p_0\le p_1<\i$ and $(\varphi_1,\varphi_2)$ and $\Phi$ satisfy the condition
\begin{equation}\label{eq3.6.VZfrMaxcom}
\int_{r}^{\i}\Big(1+\ln \frac{t}{r}\Big) \, \Big(\es_{t<s<\i}\frac{\varphi_1(x,s)}{\Phi^{-1}\big(s^{-n}\big)}\Big) \, \Phi^{-1}\big(t^{-n}\big)\frac{dt}{t}  \le C \, \varphi_2(x,r),
\end{equation}
where $C$ does not depend on $x$ and $r$. Then  the operator $[b,G_{\a}]$ is bounded from $M^{\Phi,\varphi_1}(\Rn)$ to $M^{\Phi,\varphi_2}(\Rn)$.
\end{thm}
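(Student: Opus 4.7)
The strategy mirrors the passage from Lemma \ref{lem3.3.} to Theorem \ref{thm4.4.}: having already localized the analysis to a single ball via Lemma \ref{lem5.1.}, it remains only to feed the pointwise ball estimate into the generalized Orlicz-Morrey norm and match the resulting weighted integral with the hypothesis \eqref{eq3.6.VZfrMaxcom}. The new feature relative to the non-commutator case is the factor $1+\ln(t/r)$, but this is already folded into the assumption on $(\varphi_1,\varphi_2)$, so no additional machinery beyond what was used in Theorem \ref{thm4.4.} is required.

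Fix $f\in M^{\Phi,\varphi_1}(\Rn)$ and an arbitrary ball $B(x_0,r)$. First, I invoke Lemma \ref{lem5.1.} to obtain
$$\|[b,G_{\alpha}]f\|_{L^\Phi(B(x_0,r))}\lesssim\frac{\|b\|_{*}}{\Phi^{-1}(r^{-n})}\int_{2r}^{\infty}\Big(1+\ln\frac{t}{r}\Big)\|f\|_{L^\Phi(B(x_0,t))}\,\Phi^{-1}(t^{-n})\,\frac{dt}{t}.$$
Since $t\mapsto\|f\|_{L^\Phi(B(x_0,t))}$ is non-decreasing, for every $s>t$ the definition of the Morrey-type norm gives
$$\|f\|_{L^\Phi(B(x_0,t))}\le\|f\|_{L^\Phi(B(x_0,s))}\le\|f\|_{M^{\Phi,\varphi_1}}\,\frac{\varphi_1(x_0,s)}{\Phi^{-1}(s^{-n})},$$
and taking the essential infimum over $s\in(t,\infty)$ yields the key auxiliary bound
$$\|f\|_{L^\Phi(B(x_0,t))}\le\|f\|_{M^{\Phi,\varphi_1}}\,\es_{t<s<\infty}\frac{\varphi_1(x_0,s)}{\Phi^{-1}(s^{-n})}.$$

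I then multiply the estimate from Lemma \ref{lem5.1.} by $\varphi_2(x_0,r)^{-1}\Phi^{-1}(r^{-n})$, insert the previous display, and pass to $\sup_{x_0\in\Rn,\,r>0}$. The resulting integral
$$\varphi_2(x_0,r)^{-1}\int_{r}^{\infty}\Big(1+\ln\frac{t}{r}\Big)\,\Big(\es_{t<s<\infty}\frac{\varphi_1(x_0,s)}{\Phi^{-1}(s^{-n})}\Big)\,\Phi^{-1}(t^{-n})\,\frac{dt}{t}$$
is bounded uniformly in $(x_0,r)$ precisely by the hypothesis \eqref{eq3.6.VZfrMaxcom}, and this produces $\|[b,G_{\alpha}]f\|_{M^{\Phi,\varphi_2}}\lesssim\|b\|_{*}\,\|f\|_{M^{\Phi,\varphi_1}}$, which is the claim.

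The substantive difficulty of this program lies entirely in Lemma \ref{lem5.1.} (the splitting $b(x)-b(z)=(b(x)-b_B)+(b_B-b(z))$ together with Theorem \ref{LiNaYaZhprop2.17-1}, Lemma \ref{Bmo-orlicz}, and the BMO growth \eqref{propBMO}), which is already in hand. The only point that must be verified here is that the monotonicity/essential-infimum trick in the second paragraph correctly reproduces the left-hand side of \eqref{eq3.6.VZfrMaxcom}; this is a transparent computation and I anticipate no further obstacle.
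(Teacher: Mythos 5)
Your proposal is correct, and the underlying mathematics is the same as the paper's: both rest on the ball-wise commutator estimate of Lemma \ref{lem5.1.} and then pass to the Morrey-type norm. The paper's own proof is a one-line reference to ``Lemma \ref{lem5.1.} and Theorem \ref{thm3.2.} in the same manner as in the proof of Theorem \ref{thm4.4.}.'' Your version differs in one small but genuine respect: instead of invoking Theorem \ref{thm3.2.} formally, you carry out by hand the sufficiency direction of that weighted Hardy-operator bound (using the monotonicity of $t\mapsto\|f\|_{L^\Phi(B(x_0,t))}$, bounding it by $\|f\|_{M^{\Phi,\varphi_1}}\es_{t<s<\infty}\varphi_1(x_0,s)/\Phi^{-1}(s^{-n})$, and then applying hypothesis \eqref{eq3.6.VZfrMaxcom}). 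This is actually a touch more scrupulous than the paper's citation, because the logarithmic factor $1+\ln(t/r)$ that appears in the commutator kernel depends on the lower endpoint $r$ and therefore does not literally fit the fixed-weight form $H^{\ast}_{w}g(t)=\int_t^\infty g(s)w(s)\,ds$ treated in Theorem \ref{thm3.2.}; your direct substitution avoids that mismatch. The only steps left implicit in your write-up are entirely routine — that the integrand is nonnegative so $\int_{2r}^\infty\le\int_r^\infty$, and that the final display is taken over all $(x_0,r)$ — so no gap remains.
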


\begin{proof}
The statement of Theorem \ref{3.4.XcomT} follows by Lemma \ref{lem5.1.} and Theorem \ref{thm3.2.} in the same manner as in the proof of Theorem \ref{thm4.4.}.
\end{proof}

In \cite{Wilson1}, the author proved that the functions $G_{\alpha}f$ and $g_{\alpha}f$ are pointwise comparable. Thus, as a consequence of Theorem \ref{thm4.4.}, we have the following result.
\begin{cor}\label{wucor1.6}
Let $b\in BMO$, $\a\in(0,1]$, $\Phi$ be a Young function which is lower type $p_0$ and upper type $p_1$ with $1<p_0\le p_1<\i$ and the functions $(\varphi_1,\varphi_2)$ and $\Phi$ satisfy the condition \eqref{eq3.6.VZfrMaxcom}, then $[b,g_{\a}]$ is bounded from $M^{\Phi,\varphi_1}(\Rn)$ to $M^{\Phi,\varphi_2}(\Rn)$.
\end{cor}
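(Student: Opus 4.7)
The plan is to mimic the derivation of Corollary~\ref{wucor1.5} from Theorem~\ref{thm4.4.}, but now with the commutator versions, i.e., to deduce the $[b,g_\alpha]$ statement from the already established $[b,G_\alpha]$ statement in Theorem~\ref{3.4.XcomT}. The operator $[b,g_\alpha]f$ differs from $[b,G_\alpha]f$ only in the outer domain of integration: a vertical line in $t$ for $[b,g_\alpha]f$, versus the cone $\Gamma(x)$ for $[b,G_\alpha]f$, with the inner quantity $A_{\alpha,b}f(t,y)$ being identical in both. Hence Wilson's Fubini-type argument from \cite{Wilson1} for the pointwise comparison $g_\alpha f(x)\lesssim G_\alpha f(x)$ carries over verbatim, because the factor $[b(x)-b(z)]$ sits inside the nonnegative supremum and plays no role in the geometric manipulation. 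This yields
$$
[b,g_\alpha]f(x)\lesssim [b,G_\alpha]f(x) \qquad\text{for a.e. } x\in\Rn.
$$

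Once this pointwise estimate is in hand, the rest is immediate. The monotonicity of the Luxemburg norm gives $\|[b,g_\alpha]f\|_{L^\Phi(B)}\lesssim \|[b,G_\alpha]f\|_{L^\Phi(B)}$ for every ball $B$, so passing to the Morrey norm,
$$
\|[b,g_\alpha]f\|_{M^{\Phi,\varphi_2}(\Rn)}\lesssim \|[b,G_\alpha]f\|_{M^{\Phi,\varphi_2}(\Rn)}\lesssim \|b\|_{\ast}\,\|f\|_{M^{\Phi,\varphi_1}(\Rn)},
$$
where the last inequality is precisely Theorem~\ref{3.4.XcomT} applied under the hypothesis \eqref{eq3.6.VZfrMaxcom}.

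The only step that needs explicit justification is the commutator pointwise comparison, which I do not expect to be a real obstacle: it is a direct adaptation of Wilson's original argument for $g_\alpha$ versus $G_\alpha$, made routine by the fact that the BMO factor $b(x)-b(z)$ is absorbed into a nonnegative supremum before the outer $L^2$-integration, so the cone-to-vertical reduction depends solely on the geometry of $\Gamma(x)$ and a Fubini swap in the $(y,t)$ variables. If even this minor verification is deemed insufficient, one can alternatively retrace the proof of Lemma~\ref{lem5.1.}, replacing the cone integrals by vertical-line integrals in $t$; every estimate in that proof (the boundedness on $L^\Phi$, the splitting via $b_B$, the BMO-weighted tail bound) depends only on bounds for the integrand $A_{\alpha,b}f(t,y)$ and not on the shape of the outer domain, so the same $L^\Phi$-local estimate of Lemma~\ref{lem5.1.} holds for $[b,g_\alpha]$, after which Theorem~\ref{thm3.2.} closes the argument exactly as in the proof of Theorem~\ref{3.4.XcomT}.
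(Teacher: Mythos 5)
Your approach coincides with the paper's: reduce $[b,g_\alpha]$ to $[b,G_\alpha]$ via a pointwise comparison and then invoke Theorem~\ref{3.4.XcomT} (the text's citation of Theorem~\ref{thm4.4.} here is evidently a copy-paste slip from Corollary~\ref{wucor1.5}). You are slightly more scrupulous than the paper in flagging that Wilson's pointwise equivalence is proved for $G_\alpha,g_\alpha$ rather than for the commutators, and in sketching why the argument (or, as your fallback, a rerun of Lemma~\ref{lem5.1.} over the vertical integration domain) transfers; this closes a small gap the paper leaves implicit.
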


By using the the argument as similar as the above proofs and that of Theorem \ref{thm4.4.x}, we can also show the boundedness of $[b,g_{\lambda,\alpha}^{*}]$.

\

\end{document}